 \theoremstyle{plain}
\newtheorem{theorem}{Theorem}[section]
\newtheorem{prop}[theorem]{Proposition}
\newtheorem{corollary}[theorem]{Corollary}
\newtheorem{lemma}[theorem]{Lemma}
\theoremstyle{definition}
\newtheorem{remark}[theorem]{Remark}
\newtheorem{example}[theorem]{Example}
\newtheorem{definition}[theorem]{Definition}
\newcommand{\R}{\mathbb{R}}
\newcommand{\N}{\mathbb{N}}
\newcommand{\eps}{\varepsilon}
\newcommand{\ext}[1][X^*]{\ensuremath{\mathrm{ext}(B_{#1})}}
\newcommand{\extr}{\ensuremath{\mathrm{ext}}}
 \renewcommand{\leq}{\leqslant}
 \renewcommand{\geq}{\geqslant}
 \DeclareMathOperator{\iso}{Iso}
 \DeclareMathOperator{\Id}{\mathrm{Id}}
 \DeclareMathOperator{\supp}{\mathrm{Supp}}
\begin{document}

\title[Isometries on extremely non-complex Banach spaces]
{Isometries on Extremely non-complex Banach spaces}

\author{Piotr Koszmider}

\address{Instytut Matematyki Politechniki \L\'odzkiej,
ul.\ W\'olcza\'nska 215, 90-924 \L\'od\'z, Poland}
\email{\texttt{pkoszmider.politechnika@gmail.com}}

\author{Miguel Mart\'{\i}n}
\author{Javier Mer\'{\i}}
 \address{Departamento de An\'{a}lisis Matem\'{a}tico \\ Facultad de
 Ciencias \\ Universidad de Granada \\ 18071 Granada, Spain}
 \email{\texttt{mmartins@ugr.es, jmeri@ugr.es}}
\thanks{The first author was partially supported by Polish Ministry of
Science and Higher Education research grant \mbox{N N201
386234}. The second and the third authors were partially
supported by Spanish MEC project no.\
 MTM2006-04837 and Junta de Andaluc\'{\i}a grants FQM-185 and P06-FQM-01438.}
 \subjclass[2000]{Primary: 46B04. Secondary: 46B10, 46B20, 46E15, 47A99}
 \keywords{Banach space; few operators; complex structure; Daugavet equation;
 space of continuous functions; extremely non-complex; surjective isometries; duality}

\date{April 6th, 2009. Revised: January 19th, 2010.}

\dedicatory{}

\thispagestyle{empty}

\begin{abstract}
Given a separable Banach space $E$, we construct an extremely
non-complex Banach space (i.e.\ a space satisfying that $\|\Id +
T^2\|=1+\|T^2\|$ for every bounded linear operator $T$ on it)
whose dual contains $E^*$ as an $L$-summand. We also study
surjective isometries on extremely non-complex Banach spaces and
construct an example of a real Banach space whose group of
surjective isometries reduces to $\pm\Id$, but the group of
surjective isometries of its dual contains the group of isometries
of a separable infinite-dimensional Hilbert space as a subgroup.
\end{abstract}

\maketitle

\section{Introduction}
All the Banach spaces in this paper will be real. Given a
Banach space $X$, we write $X^*$ for the topological dual,
$L(X)$ for the space of all bounded linear operators, $W(X)$
for the space of weakly compact operators and $\iso(X)$ for the
group of surjective isometries.

A Banach space $X$ is said to be \emph{extremely non-complex}
if the norm equality
\begin{equation*}
\|\Id + T^2\|=1 + \|T^2\|
\end{equation*}
holds for every $T\in L(X)$. This concept was introduced very
recently by the authors in \cite{KMM}, where several different
examples of $C(K)$ spaces are shown to be extremely
non-complex. For instance, this is the case for some perfect
compact spaces $K$ constructed by the first author
\cite{Koszmider} such that $C(K)$ has few operators (in the
sense that every operator is a weak multiplier). There are
other examples of extremely non-complex $C(K)$ spaces which
contain complemented copies of $\ell_\infty$ or $C(2^\omega)$
(and so, they do not have few operators). It is trivial that
$X=\R$ is extremely non-complex. The existence of
infinite-dimensional extremely non-complex Banach spaces had
been asked in \cite[Question~4.11]{KaMaMe}, where possible
generalizations of the Daugavet equation were investigated. We
recall that an operator $S$ defined on a Banach space $X$
satisfies the \emph{Daugavet equation} \cite{Dau} if
\begin{equation*}
\|\Id + S\|= 1 + \|S\|
\end{equation*}
and that the space $X$ has the \emph{Daugavet property}
\cite{KSSW} if the Daugavet equation holds for every rank-one
operator on $X$. We refer the reader to
\cite{AbrAli-1,AbrAli-2,KSSW,WerSur} for background on the
Daugavet property. Let us observe that $X$ is extremely
non-complex if the Daugavet equation holds for the square of
every (bounded linear) operator on $X$. Spaces $X$ in which the
square of every rank-one operator on $X$ satisfies the Daugavet
equation are studied in \cite{Oik2007}, where it is shown that
their unit balls do not have strongly exposed points. In
particular, the unit ball of an extremely non-complex Banach
space (of dimension greater than one) does not have strongly
exposed points and, therefore, the space does not have the
Radon-Nikod\'{y}m property, even the more it is not reflexive.

The name of extremely non-complex comes from the fact that a real
Banach space $X$ is said to have a \emph{complex structure} if
there exists $T\in L(X)$ such that $T^2=-\Id$, so extremely
non-complex spaces lack of complex structures in a very strong
way. Let us also comment that no hyperplane (actually no
finite-codimensional subspace) of an extremely non-complex Banach
space admits a complex structure. The existence of
infinite-dimensional real Banach spaces admitting no complex
structure is known since the 1950's, when J.~Dieudonn\'{e}
\cite{Dieu1952} showed that this is the case of the James' space
$\mathcal{J}$. We refer the reader to the very recent papers by
V.~Ferenczi and E.~Medina~Galego \cite{Ferenczi-Adv,FerenMedina}
and references therein for a discussion about complex structures
on spaces and on their hyperplanes.

Our first goal in this paper is to present examples of extremely
non-complex Banach spaces which are not isomorphic to $C(K)$ spaces.
Namely, it is proved in section~\ref{sec:CE-when-CK-hasfewoperators}
that if $K$ is a compact space such that all operators on $C(K)$ are
weak multipliers, $L$ is a closed nowhere dense subset of $K$ and
$E$ is a subspace of $C(L)$, then the space
$$
C_E(K\|L)=\{f\in C(K)\,:\, f|_L\in E\}
$$
is extremely non-complex (see section~\ref{sec:notation-CEKL}
for the definitions and basic facts about this kind of spaces).
It is also shown that there are extremely non-complex
$C_E(K\|L)$ spaces which are not isomorphic to $C(K')$ spaces.
On the other hand, some  spaces $C_E(K\|L)$ are isometric to
spaces $C(K')$ for some compact $K'$. This can be used to note
that the extremely non-complex spaces of the form $C_E(K\|L)$
may have many operators besides weak multipliers (see
Remark~\ref{view-as-cofk}).

The next aim is to show that $\iso(X)$ is a ``discrete'' Boolean
group when $X$ is extremely non-complex. Namely, we show that
$T^2=\Id$ for every $T\in \iso(X)$ (i.e.\ $\iso(X)$ is a Bolean
group and so, it is commutative) and that $\|T_1-T_2\|\in \{0,2\}$
for every $T_1,T_2\in \iso(X)$. Next, we discuss the relationship
with the set of all unconditional projections on $X$ and the
possibility of this set to be a Boolean algebra. This is the content
of section~\ref{sec:isometries-on-X}. In
section~\ref{sec:isomtries-CEKL} we particularize these results to
spaces $C_E(K\|L)$ which are extremely non-complex, getting
necessary conditions on the elements of $\iso\bigl(C_E(K\|L)\bigr)$.
In particular, if $C(K)$ is extremely non-complex, we show that the
only homeomorphism from $K$ to $K$ is the identity, obtaining that
$\iso\bigl(C(K)\bigr)$ is isomorphic to the Boolean algebra of
clopen sets of $K$.

Section~\ref{sec:main-example} is devoted to apply all the results
above to get an example showing that the behavior of the group of
isometries with respect to duality can be extremely bad. Namely,
we show that for every separable Banach space $E$, there is a
Banach space $\widetilde{X}(E)$ such that
$\iso\bigl(\widetilde{X}(E)\bigr)=\{\Id,-\Id\}$ and
$\widetilde{X}(E)^*=E^*\oplus_1 Z$, so
$\iso\bigl(\widetilde{X}(E)^*\bigr)$ contains $\iso(E^*)$ as a
subgroup. To do so, we have to modify a construction of a
connected compact space $K$ with few operators given by the first
named author in \cite[\S5]{Koszmider} and use our construction of
$C_E(K\|L)$ for a nowhere dense $L\subset K$. For the special case
$E=\ell_2$, we have
$\iso\bigl(\widetilde{X}(\ell_2)\bigr)=\{\Id,-\Id\}$, while
$\iso\bigl(\widetilde{X}(\ell_2)^*\bigr)$ contains infinitely many
uniformly continuous one-parameter semigroups of surjective
isometries. Let us comment that, in sharp contrast with the
examples above, when a Banach space $X$ is \emph{strongly unique
predual}, the group $\iso(X^*)$ consists exactly of the conjugate
operators to the elements of $\iso(X)$. Quite a lot of spaces are
actually strong unique preduals. We refer the reader to
\cite{Godefroy} for more information.

We finish this introduction with some needed notation. If $X$ is a
Banach space, we write $B_X$ to denote the closed unit ball of $X$
and given a convex subset $A\subseteq X$, $\extr(A)$ denotes the
set of extreme points of $A$. A closed subspace $Z$ of $X$ is an
\emph{$L$-summand} if $X=Z\oplus_1 W$ for some closed subspace $W$
of $X$, where $\oplus_1$ denotes the $\ell_1$-sum. A closed
subspace $Y$ of a Banach space $X$ is said to be an
\emph{$M$-ideal} of $X$ if the annihilator $Y^\perp$ of $Y$ is an
$L$-summand of $X^*$. We refer the reader to \cite{H-W-W} for
background on $L$-summands and $M$-ideals.

\section{Notation and preliminary results on the spaces $C_E(K\|L)$}
\label{sec:notation-CEKL} All along the paper, $K$ will be a
(Hausdorff) compact (topological) space and $L\subseteq K$ will
stand for a nowhere dense closed subset. Given a closed
subspace $E$ of $C(L)$, we will consider the subspace of $C(K)$
given by
$$
C_E(K\|L)=\{f\in C(K)\,:\, f|_L\in E\}.
$$
This notation is compatible with the Semadeni's book
\cite[II.~4]{Sem} notation of
$$
C_0(K\|L)=\{f\in C(K)\,:\, f|_L=0\}.
$$
This space can be identified with the space $C_0(K\setminus L)$
of those continuous functions $f:K\setminus L\longrightarrow
\R$ vanishing at infinity.

By the Riesz representation theorem, the dual space of $C(K)$
is isometric to the space $M(K)$ of Radon measures on $K$,
i.e.\ signed, Borel, scalar-valued, countably additive and
regular measures. More precisely, given $\mu\in M(K)$ and $f\in
C(K)$, the duality is given by
$$
\mu(f)=\int f d\mu.
$$
We recall that $C_0(K\|L)$ is an $M$-ideal of $C(K)$
\cite[Example~I.1.4(a)]{H-W-W}, meaning that $C_0(K\|L)^\perp$
is an $L$-summand in $C(K)^*$. This fact allows to show the
following well-known result.

\begin{lemma}\label{lemma-dual-of-czero}
$C_0(K\|L)^*\equiv\{\mu\in M(K)\,:\, |\mu|(L)=0\}$.
\end{lemma}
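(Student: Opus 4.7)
The plan is to give an explicit $\ell_1$-decomposition of $M(K)$ in which $J:=C_0(K\|L)^\perp$ is one summand and $W_0:=\{\mu\in M(K):|\mu|(L)=0\}$ is the other; the standard quotient duality $C_0(K\|L)^*\equiv M(K)/J$ will then identify the dual with $W_0$, which is exactly the asserted description.

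First I would identify $J$ with the measures concentrated on $L$, i.e.\ $\{\mu\in M(K):|\mu|(K\setminus L)=0\}$. For the inclusion $\supseteq$, if $|\mu|(K\setminus L)=0$ and $f\in C_0(K\|L)$ then $\int f\,d\mu=\int_L f\,d\mu=0$ since $f$ vanishes on $L$. For $\subseteq$, take $\mu\in J$ and view its restriction to the open subset $K\setminus L$ as a Radon measure on the locally compact Hausdorff space $K\setminus L$. Any $g\in C_c(K\setminus L)$ extends by zero to a continuous function on $K$ that vanishes on $L$ (using normality of $K$ to separate $\supp g$ from $L$), hence to an element of $C_0(K\|L)$; therefore the restricted measure annihilates $C_c(K\setminus L)$, and the Riesz representation theorem forces it to be zero, i.e.\ $|\mu|(K\setminus L)=0$.

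Next I would exhibit the decomposition directly: every $\mu\in M(K)$ splits as $\mu=\mu_L+\mu_{K\setminus L}$, where $\mu_A(B):=\mu(A\cap B)$ for Borel $A\subseteq K$. By the previous step $\mu_L\in J$ and $\mu_{K\setminus L}\in W_0$, while additivity of the total variation on disjoint Borel sets yields $\|\mu\|=|\mu|(L)+|\mu|(K\setminus L)=\|\mu_L\|+\|\mu_{K\setminus L}\|$. Hence $M(K)=J\oplus_1 W_0$, which in particular recovers the $M$-ideal property mentioned in the excerpt. Finally, for $\mu\in W_0$ and any $\nu\in J$ the $\ell_1$-sum gives $\|\mu-\nu\|=\|\mu\|+\|\nu\|\geq\|\mu\|$, so the quotient norm of $\mu$ in $M(K)/J$ equals $\|\mu\|$; combined with $C_0(K\|L)^*\equiv M(K)/J$ this shows that $\mu\mapsto\mu|_{C_0(K\|L)}$ is an isometric isomorphism from $W_0$ onto $C_0(K\|L)^*$. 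The only slightly delicate point is the Riesz representation argument in the second paragraph; the rest is bookkeeping with total variation.
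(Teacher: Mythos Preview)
Your argument is correct. The underlying idea---splitting each Radon measure as $\mu=\mu|_L+\mu|_{K\setminus L}$ to obtain an $\ell_1$-decomposition $M(K)=C_0(K\|L)^\perp\oplus_1 W_0$ and then reading off the dual via $C_0(K\|L)^*\equiv M(K)/C_0(K\|L)^\perp$---is exactly what lies behind the paper's proof as well. The difference is only in packaging: the paper quotes the $M$-ideal machinery (Proposition~I.1.12 and Remark~I.1.13 of \cite{H-W-W}), which for an $M$-ideal $Y\subseteq X$ identifies $Y^*$ isometrically with the subspace $Y^{\#}=\{\phi\in X^*:\|\phi\|=\|\phi|_Y\|\}$, and then simply observes that for $X=C(K)$, $Y=C_0(K\|L)$ this subspace is $\{\mu:|\mu|(L)=0\}$. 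You instead unfold this machinery by hand: you identify the annihilator explicitly, build the $L$-decomposition directly, and check the quotient norm. Your route is more elementary and self-contained (no appeal to \cite{H-W-W}), at the cost of a few more lines; the paper's route is a one-line citation once the $M$-ideal fact is granted. One small remark: the appeal to normality when extending $g\in C_c(K\setminus L)$ by zero is unnecessary---continuity follows immediately from the open cover $K=(K\setminus L)\cup(K\setminus\supp g)$---but this does not affect the validity of your proof.
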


\begin{proof}
Since $C_0(K\|L)$ is an $M$-ideal in $C(K)$, Proposition~1.12 and
Remark~1.13 of \cite{H-W-W} allow us to identify $C_0(K\|L)^*$ with
the subspace of $C(K)^*=M(K)$ given by
\begin{equation*}
C_0(K\|L)^{\#}=\{\mu\in M(K)\,:\, |\mu|(K)=|\mu|(K\setminus L)\}=
\{\mu\in M(K)\,:\, |\mu|(L)=0\}.\qedhere
\end{equation*}
\end{proof}

When we consider the space $C_E(K\|L)$, it still makes sense to
talk about functionals corresponding to the measures on $K$,
namely one understands them as the restriction of the
functional from $C(K)$ to $C_E(K\|L)$. However, given a
functional belonging to $C_E(K\|L)^*$ one may have several
measures on $K$ associated with it. The next result describes
the dual of a $C_E(K\|L)$ space for an arbitrary $E\subseteq
C(L)$. It is worth mentioning that its proof is an extension of
that appearing in \cite[Theorem~3.3]{MarIso}.

\begin{lemma}\label{lemma-decomposition-dual}
Let $K$ be a compact space, $L$ a closed subset of $K$ and
$E\subseteq C(L)$. Then,
$$
C_E(K\|L)^*\equiv C_0(K\|L)^*\oplus_1 C_0(K\|L)^\perp  \equiv
C_0(K\|L)^*\oplus_1 E^* .
$$
\end{lemma}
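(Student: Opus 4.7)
The plan is to exploit the $M$-ideal structure of $C_0(K\|L)$ inside $C_E(K\|L)$, apply the standard $L$-decomposition of the dual that comes with an $M$-ideal, and then identify the annihilator summand with $E^*$ via the restriction-to-$L$ map.

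First, I would establish that $C_0(K\|L)$ is an $M$-ideal of $C_E(K\|L)$. The excerpt reminds us that $C_0(K\|L)$ is an $M$-ideal of the larger space $C(K)$, so since
$$
C_0(K\|L)\subseteq C_E(K\|L)\subseteq C(K),
$$
the transitivity of the $M$-ideal property for intermediate subspaces (a routine consequence of the $3$-ball property, cf.\ \cite[Proposition~I.1.17]{H-W-W}) delivers what we need. This immediately gives the first isometric decomposition
$$
C_E(K\|L)^{*}\equiv C_0(K\|L)^{\perp}\oplus_{1} C_0(K\|L)^{*},
$$
where $C_0(K\|L)^{\perp}$ is computed inside $C_E(K\|L)^{*}$; this is the standard dual-side formulation of being an $M$-ideal (see \cite[Remark~I.1.13]{H-W-W}), and it is exactly the same mechanism already used for Lemma~\ref{lemma-dual-of-czero}.

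Next I would identify the annihilator summand with $E^{*}$. Consider the restriction map
$$
r\colon C_E(K\|L)\longrightarrow E,\qquad r(f)=f|_L.
$$
By definition of $C_E(K\|L)$ the map is well defined with kernel $C_0(K\|L)$, and by Tietze's extension theorem every $g\in E\subseteq C(L)$ admits a norm-preserving extension to some $f\in C(K)$, which automatically lies in $C_E(K\|L)$. Hence $r$ is surjective, and the induced map $C_E(K\|L)/C_0(K\|L)\to E$ is a linear bijection that is isometric: one inequality is clear, and the other uses the norm-preserving Tietze extension to bound the quotient norm by $\|g\|_E$. The usual duality between quotient and annihilator then yields
$$
C_0(K\|L)^{\perp}\equiv\bigl(C_E(K\|L)/C_0(K\|L)\bigr)^{*}\equiv E^{*},
$$
isometrically, and substituting into the first decomposition completes the proof.

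The main obstacle, and the only non-formal point, is the verification of the isometric (not merely isomorphic) identification $C_E(K\|L)/C_0(K\|L)\equiv E$; everything else is a direct application of $M$-ideal theory. That step is precisely where the norm-preserving version of the Tietze extension theorem is essential, because without it one would only obtain an equivalent renorming of $E$ and lose the clean $\ell_1$-sum structure of the dual.
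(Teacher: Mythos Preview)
Your proof is correct and follows essentially the same route as the paper: inherit the $M$-ideal property of $C_0(K\|L)$ from $C(K)$ to $C_E(K\|L)$ via \cite[Proposition~I.1.17]{H-W-W}, take the resulting $L$-decomposition of the dual, and identify $C_E(K\|L)/C_0(K\|L)$ isometrically with $E$ using the restriction map and the norm-preserving Tietze extension. The paper phrases the isometry check as the equality of open unit balls under the restriction map, but this is just a repackaging of the same Tietze argument you give.
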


\begin{proof}
We write $P: C(K)\longrightarrow C(L)$ for the restriction
operator, i.e.
$$
[P(f)](t)=f(t) \qquad (t\in L,\ f\in C(K)).
$$
Then, $C_0(K\|L)=\ker P$ and $C_E(K\|L)=\{f\in C(K) \ : \
P(f)\in E\}$. Since $C_0(K\|L)$ is an $M$-ideal in $C(K)$, it
is a fortiori an $M$-ideal in $C_E(K\|L)$ by
\cite[Proposition~I.1.17]{H-W-W}, meaning that
$$
C_E(K\|L)^*\equiv C_0(K\|L)^*\oplus_1 C_0(K\|L)^{\perp}\equiv
C_0(K\|L)^*\oplus_1 \bigl[C_E(K\|L)/C_0(K\|L)\bigr]^*.
$$
Now, it suffices to prove that the quotient
$C_E(K\|L)/C_0(K\|L)$ is isometrically isomorphic to $E$. To do
so, we define the operator $\Phi: C_E(K\|L)\longrightarrow E$
given by $\Phi(f)=P(f)$ for every $f\in C_E(K\|L)$. Then $\Phi$
is well defined, $\|\Phi\|\leq 1$, and $\ker \Phi=C_0(K\|L)$.
To see that the canonical quotient operator
$\widetilde{\Phi}:C_E(K\|L)/C_0(K\|L)\longrightarrow E$ is a
surjective isometry, it suffices to show that
$$
\Phi\bigl(\{f\in C_E(K\|L) \, : \, \|f\|<1 \}\bigr)=\{g\in E \, : \, \|g\|<1\}.
$$
Indeed, the left-hand side is contained in the right-hand side
since $\|\Phi\|\leq1$. Conversely, for every $g\in E\subseteq
C(L)$ with $\|g\|<1$, we just use Tietze's extension theorem to
find $f\in C(K)$ such that $\Phi(f)=f|_{L}=g$ and
$\|f\|=\|g\|$.
\end{proof}

If $\phi\in C_E(K\|L)^*$, by the above lemma we have
$\phi=\phi_1+\phi_E$ with $\phi_1\in C_0(K\|L)^*$ and $\phi_E\in
C_0(K\|L)^\perp\equiv E^*$. Observe that $\phi_1$ can be
isometrically associated with a measure on $K\setminus L$ by
Lemma~\ref{lemma-dual-of-czero}. which will be denoted
$\phi|_{K\setminus L}$. Given a subset $A\subseteq K$ satisfying
$A\cap L=\emptyset$, $\phi|_A$ will stand for the measure
$\phi|_{K\setminus L}$ restricted to $A$.

The next result is a straightforward consequence of the above
two lemmas.

\begin{lemma}\label{lemma-norm-in-the-dual}
Let $\phi\in C_E(K\|L)^*$ and $x\in K\setminus L$. Then
$$
\|\phi\|=\left\|\phi|_{\{x\}}\right\|+\left\|\phi|_{K\setminus
(L\cup\{x\})}\right\| + \|\phi_E\|.
$$
\end{lemma}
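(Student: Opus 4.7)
The plan is to read off the statement as a direct combination of Lemmas~\ref{lemma-dual-of-czero} and~\ref{lemma-decomposition-dual}, together with the countable additivity of the total variation of a Radon measure. No new idea is needed; the lemma is essentially a bookkeeping identity once one unpacks the notation introduced immediately before the statement.

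More precisely, the first step is to apply Lemma~\ref{lemma-decomposition-dual} to write $\phi=\phi_1+\phi_E$ with $\phi_1\in C_0(K\|L)^*$ and $\phi_E\in C_0(K\|L)^\perp\equiv E^*$, and, because the decomposition is an $\ell_1$-sum, to obtain $\|\phi\|=\|\phi_1\|+\|\phi_E\|$. The second step is to invoke Lemma~\ref{lemma-dual-of-czero} to identify $\phi_1$ isometrically with a Radon measure $\mu$ on $K$ satisfying $|\mu|(L)=0$; this is exactly the measure $\phi|_{K\setminus L}$ appearing in the paragraph preceding the statement, so that $\|\phi_1\|=|\mu|(K)=|\mu|(K\setminus L)$.

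The final step is purely measure-theoretic. Since $x\in K\setminus L$, the sets $\{x\}$ and $K\setminus(L\cup\{x\})$ form a disjoint Borel partition of $K\setminus L$, and both are disjoint from $L$. Additivity of $|\mu|$ therefore yields
$$
\|\phi_1\|=|\mu|(\{x\})+|\mu|\bigl(K\setminus(L\cup\{x\})\bigr)
=\bigl\|\phi|_{\{x\}}\bigr\|+\bigl\|\phi|_{K\setminus(L\cup\{x\})}\bigr\|,
$$
using the convention, fixed just above the statement, that for $A\subseteq K\setminus L$ the symbol $\phi|_A$ denotes the restriction of the measure $\phi|_{K\setminus L}$ to $A$, whose norm coincides with $|\mu|(A)$. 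Substituting this into $\|\phi\|=\|\phi_1\|+\|\phi_E\|$ gives the advertised identity.

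There is no real obstacle here. The only point that needs mild care is the verification that the notation $\phi|_{\{x\}}$ and $\phi|_{K\setminus(L\cup\{x\})}$ is consistent with interpreting these as Radon measures and that their norms are the corresponding total variations; this is immediate from Lemma~\ref{lemma-dual-of-czero}. Accordingly, I would present the proof as three lines: apply Lemma~\ref{lemma-decomposition-dual}, apply Lemma~\ref{lemma-dual-of-czero}, and then split the total variation over the partition $\{x\}\sqcup\bigl(K\setminus(L\cup\{x\})\bigr)$ of $K\setminus L$.
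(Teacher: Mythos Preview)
Your proposal is correct and follows exactly the approach the paper intends: the paper states the lemma as ``a straightforward consequence of the above two lemmas'' (Lemmas~\ref{lemma-dual-of-czero} and~\ref{lemma-decomposition-dual}) without giving further details, and your three-step argument---$\ell_1$-decomposition from Lemma~\ref{lemma-decomposition-dual}, identification with a measure via Lemma~\ref{lemma-dual-of-czero}, then additivity of total variation over the partition $\{x\}\sqcup\bigl(K\setminus(L\cup\{x\})\bigr)$---is precisely what that phrase unpacks to.
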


The next easy lemma describes the set of extreme points in the unit
ball of the dual of $C_E(K\|L)$ and gives a norming set for
$C_E(K\|L)$. We recall that a subset $A$ of the unit ball of the
dual of a Banach space $X$ is said to be \emph{norming} if
$$
\|x\|=\sup\{|\phi(x)|\,:\,\phi\in A\} \qquad (x\in X).
$$

\begin{lemma}\label{lemma:1-norming-CEKL}
Let $K$ be a compact space, $L$ a nowhere dense closed subset
and $E\subseteq C(L)$. We consider the set
$$
\mathcal{A}=\bigl\{\theta\,\delta_y\,
: \, y\in K\setminus L,\,\theta\in\{-1,1\}\bigr\}\subset C_E(K\|L)^*.
$$
Then:
\begin{enumerate}
\item[(a)] $\ext[C_E(K\|L)^*]=\mathcal{A}\cup \ext[E^*]$.
\item[(b)] $\mathcal{A}$ is norming for $C_E(K\|L)$.
\item[(c)] Therefore, $\mathcal{A}$ is weak$^*$-dense in
    $\ext[C_E(K\|L)^*]$.
\end{enumerate}
\end{lemma}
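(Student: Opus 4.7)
The plan is to deduce each of the three parts from standard facts combined with the decomposition of the dual obtained in Lemma~\ref{lemma-decomposition-dual} and the description of $C_0(K\|L)^*$ in Lemma~\ref{lemma-dual-of-czero}.

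For part~(a), I would start from the isometric identification
\begin{equation*}
C_E(K\|L)^* \equiv C_0(K\|L)^* \oplus_1 E^*
\end{equation*}
given by Lemma~\ref{lemma-decomposition-dual}. A standard fact about $\ell_1$-sums is that the extreme points of the unit ball of $Y\oplus_1 Z$ are exactly $\extr(B_Y)\cup\extr(B_Z)$ (viewed inside the sum). Thus it suffices to identify $\extr(B_{C_0(K\|L)^*})$ with $\mathcal{A}$. By Lemma~\ref{lemma-dual-of-czero}, $C_0(K\|L)^*$ is isometrically the space of Radon measures $\mu$ on $K$ with $|\mu|(L)=0$, which is the same as $M(K\setminus L)\equiv C_0(K\setminus L)^*$. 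The extreme points of the unit ball of $M(K\setminus L)$ are the signed Dirac measures $\pm\delta_y$ for $y\in K\setminus L$ (a classical fact for locally compact Hausdorff spaces), giving exactly $\mathcal{A}$.

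For part~(b), I would simply use that $L$ is nowhere dense in $K$, so $K\setminus L$ is dense in $K$. Hence for every $f\in C_E(K\|L)\subseteq C(K)$, continuity gives
\begin{equation*}
\|f\|=\sup_{t\in K}|f(t)|=\sup_{y\in K\setminus L}|f(y)|=\sup\bigl\{|\phi(f)|\,:\,\phi\in\mathcal{A}\bigr\},
\end{equation*}
which is the desired norming property.

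For part~(c), I would combine (b) with Milman's converse to the Krein--Milman theorem. Since $\mathcal{A}$ is symmetric ($-\mathcal{A}=\mathcal{A}$) and $1$-norming by (b), a standard Hahn--Banach/bipolar argument yields that the weak$^*$-closed convex hull of $\mathcal{A}$ coincides with $B_{C_E(K\|L)^*}$. Milman's theorem then forces every extreme point of $B_{C_E(K\|L)^*}$ to lie in the weak$^*$-closure of $\mathcal{A}$, which is precisely the claim.

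There is no real obstacle: the proof is essentially a bookkeeping exercise combining the already-established decomposition of the dual with the standard description of extreme points of $\ell_1$-sums and of $M(K\setminus L)$, together with a routine application of Milman's theorem. The only point that deserves care is to verify the bijective correspondence between the abstract summand $C_0(K\|L)^*$ inside $C_E(K\|L)^*$ and the concrete set of signed Dirac measures on $K\setminus L$, so that $\mathcal{A}$ is literally the set of extreme points and not merely a weak$^*$-dense subset at that stage.
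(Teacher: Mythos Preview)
Your proposal is correct and follows essentially the same route as the paper: the decomposition $C_E(K\|L)^*\equiv C_0(K\|L)^*\oplus_1 E^*$ together with the standard description of extreme points of an $\ell_1$-sum and of $C_0(K\setminus L)^*$ for~(a), density of $K\setminus L$ for~(b), and the Hahn--Banach/reverse Krein--Milman argument for~(c). The only cosmetic difference is that you spell out the bipolar step for~(c) a bit more explicitly than the paper does.
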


\begin{proof}
By Lemma~\ref{lemma-decomposition-dual} and the description of
the extreme points of the unit ball of an $\ell_1$-sum of
Banach spaces \cite[Lemma~I.1.5]{H-W-W}, we have
$$
\ext[C_E(K\|L)^*]=\ext[C_0(K\setminus L)^*] \cup \ext[E^*].
$$
It suffices to recall that $\ext[C_0(K\setminus
L)^*]=\mathcal{A}$ (see \cite[Theorem~2.3.5]{Fle-Jam1} for
instance) to get (a). The fact that $\mathcal{A}$ is norming
for $C_E(K\|L)$ is a direct consequence of the fact that
$K\setminus L$ is dense in $K$. Finally, every norming set is
weak$^*$-dense in $\ext[C_E(K\|L)^*]$ by the Hahn-Banach
theorem and the reversed Krein-Milman theorem.
\end{proof}

We introduce one more ingredient which will play a crucial role
in our arguments. Given an operator $U\in
L\big(C_E(K\|L)^*\big)$, we consider the function
$$
g_U:K\setminus L \longrightarrow [-\|U\|,\|U\|], \qquad
g_U(x)=U(\delta_x)(\{x\})\qquad \bigl(x\in K\setminus L\bigr).
$$
This obviously extends to operators on $C_E(K\|L)$ by passing
to the adjoint, that is, for $T\in L\big(C_E(K\|L)\big)$ one
can consider $g_{T^*}:K\setminus L\longrightarrow
[-\|T\|,\|T\|]$. This is a generalization of a tool  used in
\cite{Wer0} under the name ``stochastic kernel''. One of the
results in that paper can be generalized to the following.

\begin{lemma}\label{lemma-suf-condition}
Let $K$ be a compact space, $L$ a nowhere dense closed subset
of $K$, $E\subseteq C(L)$, and $T\in L\bigl(C_E(K\|L)\bigr)$.
If the set $\{x\in K\setminus L \, : \, g_{T^*}(x)\geq 0\}$ is
dense in $K\setminus L$, then $T$ satisfies the Daugavet
equation.
\end{lemma}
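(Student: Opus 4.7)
The plan is to pass to the adjoint and evaluate at the norming functionals $\delta_x$, $x \in K\setminus L$, supplied by Lemma~\ref{lemma:1-norming-CEKL}(b). Since $\|\Id + T\| \leq 1 + \|T\|$ is automatic, I only need to produce, for every $\eps > 0$, a point $x \in K\setminus L$ with $\|(\Id + T)^*\delta_x\| > 1 + \|T\| - \eps$.

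The key pointwise identity to establish first is
\begin{equation*}
\|(\Id + T)^*\delta_x\| = 1 + \|T^*\delta_x\| \qquad \text{whenever } g_{T^*}(x) \geq 0.
\end{equation*}
Indeed, for $x \in K\setminus L$ the functional $\delta_x$ sits entirely in the $C_0(K\|L)^*$ summand of Lemma~\ref{lemma-decomposition-dual} as the point-mass at $x$, with zero $E$-part. Lemma~\ref{lemma-norm-in-the-dual} applied to $\delta_x + T^*\delta_x$ at the point $x$ then gives
\begin{equation*}
\|\delta_x + T^*\delta_x\| = |1 + g_{T^*}(x)| + \bigl\|T^*\delta_x|_{K\setminus(L\cup\{x\})}\bigr\| + \|(T^*\delta_x)_E\|,
\end{equation*}
and under the sign hypothesis $|1 + g_{T^*}(x)| = 1 + |g_{T^*}(x)|$, so the right-hand side collapses to $1 + \|T^*\delta_x\|$ by a second use of Lemma~\ref{lemma-norm-in-the-dual}.

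To finish, I would transfer this identity to a near-maximizer of $\|T^*\delta_x\|$. Because $\mathcal{A}$ is $1$-norming for $C_E(K\|L)$, one has $\|T\| = \sup\{\|T^*\delta_x\| : x \in K\setminus L\}$; moreover, the map $x \mapsto T^*\delta_x$ is weak$^*$-continuous (since for each $f \in C_E(K\|L)$, $(T^*\delta_x)(f) = (Tf)(x)$ depends continuously on $x$), so $x \mapsto \|T^*\delta_x\|$ is lower semi-continuous on $K\setminus L$. Given $\eps > 0$, any $x_0 \in K\setminus L$ with $\|T^*\delta_{x_0}\| > \|T\| - \eps$ lies in the open set $\{x \in K\setminus L : \|T^*\delta_x\| > \|T\| - \eps\}$, which by hypothesis must meet the dense set $D := \{x \in K\setminus L : g_{T^*}(x) \geq 0\}$. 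Picking $x_1$ in this intersection and invoking the key identity,
\begin{equation*}
\|\Id + T\| \geq \|(\Id + T)^*\delta_{x_1}\| = 1 + \|T^*\delta_{x_1}\| > 1 + \|T\| - \eps,
\end{equation*}
and letting $\eps \to 0$ concludes.

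The only delicate point, in my view, is the lower-semicontinuity step: the pointwise identity $\|(\Id+T)^*\delta_x\| = 1 + \|T^*\delta_x\|$ will in general fail for $x \notin D$, so density of $D$ by itself is not enough; one must propagate an almost-maximizer of $\|T^*\delta_x\|$ from $K\setminus L$ into $D$, and that is precisely what weak$^*$-continuity of $x \mapsto T^*\delta_x$ secures.
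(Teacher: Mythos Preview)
Your proof is correct and follows essentially the same approach as the paper: both use Lemma~\ref{lemma-norm-in-the-dual} to decompose $\|\delta_x + T^*\delta_x\|$, then intersect the (open, by lower semicontinuity of $x\mapsto\|T^*\delta_x\|$) set of near-maximizers with the dense set $\{g_{T^*}\geq 0\}$ to conclude. Your presentation is marginally cleaner in isolating the pointwise identity $\|(\Id+T)^*\delta_x\|=1+\|T^*\delta_x\|$ on $D$ up front, whereas the paper carries a chain of inequalities, but the substance is identical.
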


\begin{proof} We use
Lemmas \ref{lemma-decomposition-dual},
\ref{lemma-norm-in-the-dual} and \ref{lemma:1-norming-CEKL} to
get
\begin{align}\label{eq:lemma-suf-condition}
\|\Id+T^*\|&\geq \underset{x\in K\setminus L}{\sup}\|\delta_x+T^*(\delta_x)\|\\
&=\underset{x\in K\setminus
L}{\sup}|1+T^*(\delta_x)(\{x\})|+\|T^*(\delta_x)|_{(K\setminus (L\cup\{x\}))}\|+
\|T^*(\delta_x)|_E\|\notag\\
&=\underset{x\in K\setminus L}{\sup}|1+T^*(\delta_x)(\{x\})|
-|T^*(\delta_x)(\{x\})|+\|T^*(\delta_x)\| \notag\\
&\geq\underset{x\in K\setminus
L}{\sup}1+T^*(\delta_x)(\{x\})-|T^*(\delta_x)(\{x\})|+\|T^*(\delta_x)\|.\notag
\end{align}
Now, we claim that the set $\{x\in K\setminus L\,:\,
\|T^*(\delta_x)\|>\|T\|-\eps\}$ is nonempty and open in
$K\setminus L$ for every $\eps>0$. Indeed, take a norm one
function $f\in C_E(K\|L)$ such that $\|T(f)\|>\|T\|-\eps$ and
use the fact that $K\setminus L$ is dense in $K$ to find $x\in
K\setminus L$ satisfying
$$
\|T^*(\delta_x)\|\geq|T^*(\delta_x)(f)|=|T(f)(x)|>\|T\|-\eps.
$$
To show that $\{x\in K\setminus L\,:\,
\|T^*(\delta_x)\|>\|T\|-\eps\}$ is open we prove that the
mapping
$$
x\longmapsto \|T^*(\delta_x)\| \qquad (x\in K\setminus L)
$$
is lower semicontinuous. To do so, since $T^*$ is weak$^*$
continuous and $\|\cdot\|$ is weak$^*$ lower semicontinuous, it
suffices to observe that the mapping which sends $x$ to $\delta_x$
is continuous with respect to the weak$^*$ topology on
$C_E(K\|L)^*$. But this is so since, for $a\in \R$ and $f\in
C_E(K\|L)$, the preimage of the subbasic set $\{\phi\in
C_E(K\|L)^*\,:\, \phi(f)<a\}$ in this topology is $\{x\in K\setminus
L\,:\, f(x)<a\}$ which is open in $K\setminus L$.

To finish the proof, we use the hypothesis to find $x_0\in
K\setminus L$ satisfying
$$
\|T^*(\delta_{x_0})\|>\|T\|-\eps \qquad \text{and} \qquad
g_{T^*}(x_0)=T^*(\delta_{x_0})(\{x_0\})\geq 0
$$
and we use it in \eqref{eq:lemma-suf-condition} to obtain
$$
\|\Id+T^*\|\geq 1+\|T^*(\delta_{x_0})\|>1+\|T\|-\eps
$$
which implies that $\|\Id+T\|=\|\Id+T^*\|\geq 1+\|T\|$ since
$\eps$ was arbitrary.
\end{proof}

\section{Spaces $C_E(K\|L)$ when $C(K)$ has few
operators}\label{sec:CE-when-CK-hasfewoperators} Let us start
fixing some notation and terminology that will be used throughout
the section. If $g : K\longrightarrow \R$ is a bounded Borel
function, we will consider the operator $g\Id : C(K)^*
\longrightarrow C(K)^*$ which sends the functional which is the
integration of a function $f\in C(K)$ with respect to a measure
$\mu$ to the functional which is the integration of the product
$fg$ with respect to $\mu$.

\begin{definition}\label{def-weak-multiplication-weak-multiplier}
Let $K$ be a compact space and $T\in L(C(K))$. We say that $T$
is a \emph{weak multiplier} if $T^*=g\Id+S$ where
$g:K\longrightarrow \R$ is a bounded Borel function  on $K$ and
$S\in W\big(C(K)^*\big)$.
\end{definition}

This definition was given in \cite{Koszmider} in an equivalent
form (see \cite[Definition~2.1]{Koszmider} and
\cite[Theorem~2.2]{Koszmider}).

\begin{definition} We say that an open set $V\subseteq K$ is
\emph{compatible} with $L$ if and only if $L\subseteq V$ or $L\cap
\overline{V}=\emptyset$. In the first case, the notation
$C_E(\overline{V}\|L)$ has the same meaning as in the previous
section. If $L\cap \overline{V}=\emptyset$, we will write
$C_E(\overline{V}\|L)$ just to denote $C(\overline{V})$. Let us
also observe that if $L\subseteq V$, then
$$
C_E(\overline{V}\|L)^*\equiv
C_0(\overline{V}\|L)^*\oplus_1C_0(\overline{V}\|L)^\perp\equiv
C_0(\overline{V}\|L)^*\oplus_1 E^*
$$
since Lemma~\ref{lemma-decomposition-dual} applies to
$\overline{V}$.
\end{definition}

Given an open set $V\subseteq K$ compatible with $L$, we consider
the restriction operator
$P^{\overline{V}}:C_E(K\|L)^*\longrightarrow
C_E(\overline{V}\|L)^*$ given by
$$
P^{\overline{V}}(\phi)=\phi|_{\overline{V}\setminus L}+\phi_E
$$
for $\phi=\phi|_{K\setminus L}+\phi_E$ where $\phi|_{K\setminus
L}\in C_0(K\|L)^*$ and $\phi_E\in C_0(K\|L)^\perp\equiv E^*$.
Observe that $\phi_E$ can also be viewed as an element of
$C_0(\overline{V}\|L)^\perp$ since the spaces
$C_0(\overline{V}\|L)^\perp$ and $C_0(K\|L)^\perp$ are isometrically
isomorphic (both coincide with $E^*$).

Given an open set $V\subseteq K$ compatible with $L$ and
$h:K\longrightarrow[0,1]$ a continuous function constant on $L$ with
support included in $V$, we denote by $P_{\overline{V}}:
C_E(K\|L)\longrightarrow C_E(\overline{V}\|L)$ and
$I_{h,\overline{V}}:C_E(\overline{V}\|L)\longrightarrow C_E(K\|L)$
the operators defined by
$$
P_{\overline{V}}(f)=f|_{\overline{V}}\qquad \text{and}\qquad
I_{h,\overline{V}}(\widetilde{f})=h\widetilde{f}
$$
for $f\in C_E(K\|L)$ and $\widetilde{f}\in C_E(\overline{V}\|L)$
respectively. We observe that $I_{h,\overline{V}}$ is well defined,
that is, $h\widetilde{f}$ is a function in $C(K)$ with
$h\widetilde{f}|_L \in E$ (indeed, $h\widetilde{f}$ is continuous in
$V$ as a product of two continuous functions and it is continuous in
$K\setminus \supp(h)$ as a constant function, since these two sets
form an open cover of $K$ we have that $h\widetilde{f}$ is
continuous in $K$; being $h$ constant on $L$, it is clear that
$h\widetilde{f}|_L \in E$). Finally, If $V_1\subseteq K$ is an open
set compatible with $L$ satisfying $\overline{V}\subseteq V_1$ we
will also use the notation $P_{\overline{V}}$ for the restriction
operator from $C_E(\overline{V}_1\|L)$ to $C_E(\overline{V}\|L)$. In
the next result we gather some easy facts concerning these
operators.

\begin{lemma}\label{lemma-duals-inlusion-multiplication}
Let $V$ and $h$ be as above. Then, the following hold:
\begin{enumerate}
\item[(a)] $P_{\overline{V}}^*(\phi)(f)=\phi(f|_{\overline{V}})$ for
$\phi\in C_E(\overline{V}\|L)^*$ and $f\in C_E(K\|L)$.
\item[(b)]$I_{h,\overline{V}}^*(\phi)(\widetilde{f})=\phi(\widetilde{f}h)$
for $\phi\in C_E(K\|L)^*$ and $\widetilde{f}\in
C_E(\overline{V}\|L)$.
\item[(c)] If $V_0$ is an open set such that $\overline{V}_0\subseteq
V$ and $h|_{\overline{V}_0}\equiv 1$, then
$(I_{h,\overline{V}}^*\,P_{\overline{V}}^*)(\mu)=\mu$ for every
$\mu\in C(K)^*$ with $\supp(\mu)\subseteq V_0$.
\item[(d)] If $E=C(L)$, then $C_E(K\|L)=C(K)$,
$C_E(\overline{V}\|L)=C(\overline{V})$, and
$P^{\overline{V}}P_{\overline{V}}^*(\mu)=\mu$
    for every $\mu\in C(\overline{V})^*$.
\end{enumerate}
\end{lemma}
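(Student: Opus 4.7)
The plan is to verify each of the four assertions by unwinding the definition of the adjoint map, together with the $L$-summand decomposition from Lemma~\ref{lemma-decomposition-dual}.

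Parts (a) and (b) are one-line applications of the adjoint formula $T^*(\phi)(x) = \phi(Tx)$: one has $P_{\overline V}^*(\phi)(f) = \phi(P_{\overline V}(f)) = \phi(f|_{\overline V})$ for (a), and $I_{h,\overline V}^*(\phi)(\widetilde f) = \phi(I_{h,\overline V}(\widetilde f)) = \phi(h\widetilde f)$ for (b), where $h\widetilde f$ is a legitimate element of $C_E(K\|L)$ by the well-definedness discussion preceding the lemma.

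For (c), I would chain (a) and (b): for any $\widetilde f \in C_E(\overline V\|L)$,
\[
(I_{h,\overline V}^* \, P_{\overline V}^*)(\mu)(\widetilde f) \;=\; P_{\overline V}^*(\mu)(h\widetilde f) \;=\; \mu\bigl((h\widetilde f)|_{\overline V}\bigr),
\]
where $\mu\in C(K)^*$ is identified with its natural image in $C_E(\overline V\|L)^*$ via restriction (possible since $\supp(\mu)\subseteq V_0\subseteq \overline V$). Because $h \equiv 1$ on $\overline V_0$ and $\supp(\mu) \subseteq V_0 \subseteq \overline V_0$, the integrand $(h\widetilde f)|_{\overline V}$ coincides with $\widetilde f$ on $\supp(\mu)$, so the right-hand side equals $\mu(\widetilde f)$, giving (c).

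For (d), the assumption $E = C(L)$ makes $C_E(K\|L) = C(K)$ and $C_E(\overline V\|L) = C(\overline V)$ immediately. By (a), $P_{\overline V}^*(\mu) \in C(K)^*$ is the extension of $\mu$ by zero outside $\overline V$. Its decomposition via Lemma~\ref{lemma-decomposition-dual} reads $\mu|_{\overline V \setminus L} \oplus_1 \mu|_L$, with the first summand in $C_0(K\|L)^*$ (already supported in $\overline V$) and the second in $E^* = C(L)^*$. The operator $P^{\overline V}$ restricts the first summand from $K \setminus L$ to $\overline V \setminus L$ (a no-op here) and preserves the second, so reassembling recovers $\mu$ in $C(\overline V)^*$. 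The main obstacle throughout is purely bookkeeping: each functional must be correctly identified with an element of the right dual space, and extensions-by-zero and restrictions must be tracked through the $L$-summand decomposition. No nontrivial estimate is required.
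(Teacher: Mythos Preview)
Your proofs of (a), (b), and (c) are essentially identical to the paper's: direct unwinding of the adjoint, followed in (c) by the observation that $(h\widetilde f)|_{\overline V}$ agrees with $\widetilde f$ on $\supp(\mu)$.

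For (d) you take a genuinely different and more conceptual route. You identify $P_{\overline V}^*(\mu)$ immediately as the pushforward of $\mu$ under the inclusion $\overline V\hookrightarrow K$ (extension by zero), then read off its $L$-summand decomposition $\mu|_{\overline V\setminus L}\oplus \mu|_L$ and observe that $P^{\overline V}$ acts as the identity on both pieces. The paper instead works at the level of integrals: it never explicitly invokes that $P_{\overline V}^*(\mu)$ is supported on $\overline V$, and so it approximates $\widetilde f\in C(\overline V)$ by Tietze extensions $f_n\in C(K)$ vanishing outside shrinking neighbourhoods $V_n\supseteq\overline V$, using regularity of $P_{\overline V}^*(\mu)$ to kill the error term $\int_{V_n\setminus\overline V}f_n\,dP_{\overline V}^*(\mu)$. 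Your argument is shorter and isolates the one structural fact that makes (d) work; the paper's approximation argument is self-contained in that it avoids appealing to the Riesz identification of $P_{\overline V}^*(\mu)$ with a specific measure on $K$, at the cost of an $\varepsilon$-$n$ computation that is, strictly speaking, unnecessary once one knows $|P_{\overline V}^*(\mu)|(K\setminus\overline V)=0$.
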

\begin{proof}
$\mathrm{(a)}$ and $\mathrm{(b)}$ are obvious from the definitions
of the operators. To prove $\mathrm{(c)}$ we fix $\widetilde{f}\in
C_E(\overline{V}\|L)$, $\mu\in C(K)^*$ with $\supp(\mu)\subseteq
V_0$ and we observe that
\begin{align*}
(I_{h,\overline{V}}^*\,P_{\overline{V}}^*)(\mu)(\widetilde{f})&=P_{\overline{V}}^*(\mu)
\big(I_{h,\overline{V}}(\widetilde{f})\big)=P_{\overline{V}}^*(\mu)(\widetilde{f}h)\\&=
\mu\left((\widetilde{f}h)|_{\overline{V}}\right)=\int(\widetilde{f}h)|_{\overline{V}}\,d\mu=
\int_{V_0}\widetilde{f}\,d\mu=\mu(\widetilde{f}).
\end{align*}

The first two assertions of $\mathrm{(d)}$ are obvious. For the
third one, given $\mu\in C(\overline{V})^*$ and $\widetilde{f}\in
C(\overline{V})$, use the regularity of the measure
$P_{\overline{V}}^*(\mu)$ to find an open set $V_n\subseteq K$
satisfying
\begin{equation}\label{eq:lemma-properties-operators}
\overline{V}\subseteq V_n \qquad \text{and} \qquad
\left|P_{\overline{V}}^*(\mu)\right|(V_n\setminus\overline{V})<\frac1n
\end{equation}
for every $n\in\N$. Next, take $f_n\in C(K)$ satisfying
$$
f_n|_{\overline{V}}\equiv \widetilde{f}, \qquad f_n|_{K\setminus
V_n}\equiv 0, \qquad \text{and} \qquad \|f_n\|=\|\widetilde{f}\|
$$
for every $n\in\N$, and observe that
\begin{align*}
P^{\overline{V}}P_{\overline{V}}^*(\mu)(\widetilde{f})&=
\bigl(P_{\overline{V}}^*(\mu)\bigr)|_{\overline{V}}(\widetilde{f})=\int_{\overline{V}}
\widetilde{f}\,d\bigl(P_{\overline{V}}^*(\mu)\bigr)|_{\overline{V}}
\\&=\int_{K}f_n\,dP_{\overline{V}}^*(\mu)-\int_{V_n\setminus\overline{V}}
f_n\,dP_{\overline{V}}^*(\mu)\\&=P_{\overline{V}}^*(\mu)(f_n)-\int_{V_n\setminus\overline{V}}
f_n\,dP_{\overline{V}}^*(\mu)=\mu(\widetilde{f})-\int_{V_n\setminus\overline{V}}
f_n\,dP_{\overline{V}}^*(\mu).
\end{align*}
Therefore, using \eqref{eq:lemma-properties-operators} and letting
$n\rightarrow \infty$, it follows that
$P^{\overline{V}}P_{\overline{V}}^*(\mu)(\widetilde{f})=\mu(\widetilde{f})$.
\end{proof}

Our first application uses the above operators in the simple case in
which $E=C(L)$.

\begin{prop}\label{operators-on-open-of-assymetric}
Let $K$ be a compact space, let $V_0$, $V_1$, and $V_2$ be open
nonempty subsets of $K$ such that $\overline{V}_0\subseteq
V_1$, and let $R: C(\overline{V}_2) \longrightarrow
C(\overline{V}_1)$ be a linear operator. Suppose that all
operators on $C(K)$ are weak multipliers. Then, there are a
Borel function $g:\overline{V}_1 \longrightarrow \R$ with
support included in $\overline{V}_1\cap \overline{V}_2$ and a
weakly compact operator $S: C(\overline{V}_1)^*\longrightarrow
C(\overline{V}_2)^*$ such that
$$
R^*(\mu)=g\mu+S(\mu)
$$
for every $\mu\in C(K)^*$ with $\supp(\mu)\subseteq V_0$.
\end{prop}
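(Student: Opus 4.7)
The plan is to pack $R$ into a genuine operator on $C(K)$, invoke the weak-multiplier hypothesis, and then project back down. Using Urysohn's lemma (which applies since $\overline{V}_0\subseteq V_1$), I would choose a continuous $h_1 : K \to [0,1]$ with $h_1 \equiv 1$ on $\overline{V}_0$ and $\supp(h_1) \subseteq V_1$, and set
\[
T \;:=\; I_{h_1, \overline{V}_1} \circ R \circ P_{\overline{V}_2} \;\in\; L\bigl(C(K)\bigr).
\]
By hypothesis one may then write $T^* = g_1 \Id + S_1$ for some bounded Borel function $g_1 : K \to \R$ and some $S_1 \in W\bigl(C(K)^*\bigr)$.

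The next step is to extract $R^*$ from $T^*$ on the right class of measures. Fix $\mu\in C(K)^*$ with $\supp(\mu)\subseteq V_0$. Part (b) of Lemma \ref{lemma-duals-inlusion-multiplication} together with $h_1\equiv 1$ on $\supp(\mu)$ identifies $I_{h_1,\overline{V}_1}^*(\mu)$ with $\mu$ itself viewed as an element of $C(\overline{V}_1)^*$, so that $T^*(\mu) = P_{\overline{V}_2}^*\bigl(R^*(\mu)\bigr)$. Applying $P^{\overline{V}_2}$ to both sides and invoking part (d) of the same lemma (i.e., $P^{\overline{V}_2}\circ P_{\overline{V}_2}^* = \Id$) then collapses this to
\[
R^*(\mu) \;=\; P^{\overline{V}_2}\bigl(T^*(\mu)\bigr) \;=\; P^{\overline{V}_2}(g_1 \mu) \;+\; P^{\overline{V}_2}\bigl(S_1(\mu)\bigr).
\]

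The final step is to identify the two summands on the right. Define $g : \overline{V}_1 \to \R$ to equal $g_1$ on $\overline{V}_1 \cap \overline{V}_2$ and to vanish elsewhere; this is bounded Borel with $\supp(g) \subseteq \overline{V}_1 \cap \overline{V}_2$, and since $\supp(\mu) \subseteq V_0 \subseteq \overline{V}_1$ both $P^{\overline{V}_2}(g_1\mu)$ and $g\mu$ reduce to $g_1\mu$ restricted to $V_0 \cap \overline{V}_2$. Setting $S := P^{\overline{V}_2} \circ S_1 \circ P_{\overline{V}_1}^* : C(\overline{V}_1)^* \to C(\overline{V}_2)^*$ yields a weakly compact operator (as $S_1$ is and the weakly compact operators form an operator ideal), and because $P_{\overline{V}_1}^*$ sends $\mu|_{\overline{V}_1}$ back to $\mu$ we get $S(\mu) = P^{\overline{V}_2}\bigl(S_1(\mu)\bigr)$; combining these gives the desired identity $R^*(\mu) = g\mu + S(\mu)$. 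The main subtlety throughout is the bookkeeping: the same measure $\mu$ must be fluidly identified with elements of three different dual spaces $C(K)^*$, $C(\overline{V}_1)^*$, $C(\overline{V}_2)^*$, and parts (b)--(d) of Lemma \ref{lemma-duals-inlusion-multiplication} are tailored precisely to let one move between them.
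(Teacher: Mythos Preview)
Your argument is correct and follows the same overall strategy as the paper: build an operator on $C(K)$ via $I_{h_1,\overline{V}_1}\, R\, P_{\overline{V}_2}$, apply the weak-multiplier hypothesis, then project back using $P^{\overline{V}_2}$ and the identities of Lemma~\ref{lemma-duals-inlusion-multiplication}, with the same definitions of $g$ and of $S=P^{\overline{V}_2}\circ S_1\circ P_{\overline{V}_1}^*$.

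The one noteworthy difference is that the paper proves the intermediate identity $\bigl(P^{\overline{V}_2}\,\widehat g\,\Id\, P_{\overline{V}_1}^*\bigr)(\mu)=g\mu$ for \emph{all} $\mu\in C(\overline{V}_1)^*$, which forces a detour through Lusin's theorem (because $P_{\overline{V}_1}^*(\mu)$ need not be the naive zero-extension of $\mu$ when $\mu$ charges the boundary). You sidestep this by computing $T^*(\mu)$ directly for $\mu$ supported in $V_0$, so that $g_1\mu$ is already a measure on $K$ concentrated well inside $V_1$; the restriction $P^{\overline{V}_2}(g_1\mu)$ is then transparently $g\mu$, and the matching $P_{\overline{V}_1}^*(\mu|_{\overline{V}_1})=\mu$ needed for the $S$-term is immediate. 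Since the proposition only claims the identity for such $\mu$, your route is a legitimate simplification of the paper's proof.
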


\begin{proof}
We fix a continuous function $h: K\longrightarrow [0,1]$ satisfying
$h|_{\overline{V}_0}\equiv 1$ and $h|_{(K\setminus{V}_1)}\equiv 0$
and we define $R_0\in L(C(K))$ by
\begin{equation}\label{eq:prop-assymetric-R0}
R_0(f)=I_{h,\overline{V_1}}\, R P_{\overline{V}_2}(f)
\end{equation}
for $f\in C(K)$. Hence, there are a bounded Borel function
$\widehat{g}:K\longrightarrow \R$ and a weakly compact operator
$\widehat{S}\in L(C(K)^*)$ such that
$R_0^*(\mu)=\widehat{g}\mu+\widehat{S}(\mu)$ for $\mu\in C(K)^*$,
which allows us to write
\begin{equation}\label{eq:prop-assymetric-PV2R0PV1}
P^{\overline{V}_2}R_0^* P^*_{\overline{V}_1}= P^{\overline{V}_2}
\widehat{g}\Id_{C(K)^*} P^*_{\overline{V}_1}+ P^{\overline{V}_2}
\widehat{S} P^*_{\overline{V}_1}.
\end{equation}
We claim that, considering the weakly compact operator given by
$S=P^{\overline{V}_2} \widehat{S} P^*_{\overline{V}_1}$ and defining
the functions $\breve{g}:K \longrightarrow \R$ and $g:\overline{V}_1
\longrightarrow \R$ by
$$
\breve{g}(x)=\begin{cases}\widehat{g}(x) &\text{ if } x\in
\overline{V}_1\cap \overline{V}_2 \\ 0 &\text{ if } x\notin
\overline{V}_1\cap \overline{V}_2
\end{cases} \qquad \text{and} \qquad g=\breve{g}|_{\overline{V}_1}
$$
the following holds for $\mu\in C(\overline{V}_1)^*$\,:
\begin{equation}\label{eq:prop-assymetric-3}
\big(P^{\overline{V}_2}R_0^*
P^*_{\overline{V}_1}\big)(\mu)=g\mu+S(\mu).
\end{equation}
Indeed, for $\mu\in C(\overline{V}_1)^*$ and $f\in
C(\overline{V}_2)$ we observe that
\begin{align*}
\big(P^{\overline{V}_2} \widehat{g}\Id_{C(K)^*}
P^*_{\overline{V}_1}\big)(\mu)(f)&=(\widehat{g}P^*_{\overline{V}_1}
(\mu))|_{\overline{V}_2}(f)=\int_{\overline{V}_2}\widehat{g}f\,dP^*_{\overline{V}_1}(\mu)\\
&=\int_{\overline{V}_1\cap\overline{V}_2}\widehat{g}f\,dP^*_{\overline{V}_1}(\mu)
=\int_{\overline{V}_1\cap\overline{V}_2}\breve{g}f\,dP^*_{\overline{V}_1}(\mu)
=\int_{K}\breve{g}f\,dP^*_{\overline{V}_1}(\mu)
\end{align*}
and, for $n\in\N$, we use Lusin's Theorem (see
\cite[Theorem~21.4]{Munroe}, for instance) to find a compact set
$K_n\subseteq K$ such that
\begin{equation}\label{eq:prop-assymetric-Kn}
\breve{g}|_{K_n} \text{ is continuous on } K_n, \quad |P_{\overline
{V}_1}^*(\mu)|(K\setminus K_n)<\frac1n, \quad \text{and} \quad
|\mu|\bigl(\overline{V}_1\setminus(\overline{V}_1\cap
K_n)\bigr)<\frac1n\,.
\end{equation}
Using Tietze's extension Theorem we may find a continuous function
$g_n:K\longrightarrow \R$ satisfying
$$
g_n|_{K_n}=\breve{g}|_{K_n} \qquad \text{and} \qquad
\|g_n\|=\|\breve{g}|_{K_n}\|\leq\|\breve{g}\|
$$
for every $n\in\N$. Now it is easy to check that
\begin{align*}
\int_{K}\breve{g}f\,dP^*_{\overline{V}_1}(\mu)&=\int_{K}g_nf\,dP^*_{\overline{V}_1}(\mu)
+\int_{K\setminus K_n}(\breve{g}-g_n)f\,dP^*_{\overline{V}_1}(\mu)\\
&=P^*_{\overline{V}_1}(\mu)(g_nf)+\int_{K\setminus
K_n}(\breve{g}-g_n)f\,dP^*_{\overline{V}_1}(\mu)\\
&=\mu\bigl((g_nf)|_{\overline{V}_1}\bigr)+\int_{K\setminus
K_n}(\breve{g}-g_n)f\,dP^*_{\overline{V}_1}(\mu)\\
&=\int_{\overline{V}_1}g_nf d\mu+\int_{K\setminus
K_n}(\breve{g}-g_n)f\,dP^*_{\overline{V}_1}(\mu)\\
&=\int_{\overline{V}_1}gf
d\mu+\int_{\overline{V}_1\setminus(\overline{V}_1\cap
K_n)}(g_n-\breve{g})f d\mu+\int_{K\setminus
K_n}(\breve{g}-g_n)f\,dP^*_{\overline{V}_1}(\mu)
\\
&=\mu(gf)+\int_{\overline{V}_1\setminus(\overline{V}_1\cap
K_n)}(g_n-\breve{g})f d\mu+\int_{K\setminus
K_n}(\breve{g}-g_n)f\,dP^*_{\overline{V}_1}(\mu)
\end{align*}
which, letting $n\rightarrow\infty$ and using
\eqref{eq:prop-assymetric-Kn}, implies that
$$
\int_{K}\breve{g}f\,dP^*_{\overline{V}_1}(\mu)=\mu(gf)
$$
and, therefore,
$$
\big(P^{\overline{V}_2} \widehat{g}\Id_{C(K)^*}
P^*_{\overline{V}_1}\big)(\mu)(f)=\mu(gf).
$$
This, together with \eqref{eq:prop-assymetric-PV2R0PV1} and the
definition of $S$, finishes the proof of the claim. On the other
hand by \eqref{eq:prop-assymetric-R0}, we can write
$$
P^{\overline{V}_2}R^*_0 P^*_{\overline{V}_1}= P^{\overline{V}_2}
P^*_{\overline{V}_2} R^* I_{h,\overline{V}_1}^*
P^*_{\overline{V}_1}.
$$
So, if the support of $\mu$ is included in $V_0$, by
Lemma~\ref{lemma-duals-inlusion-multiplication}, parts (c) and (d)
we obtain
$$
P^{\overline{V}_2} R^*_0 P^*_{\overline{V}_1}(\mu)=R^*(\mu)
$$
and, consequently, $R^*(\mu)=g\mu+S(\mu)$ follows from
\eqref{eq:prop-assymetric-3}.
\end{proof}

\begin{remark}
The result above shows that if every operator on $C(K)$ is a weak
multiplier then, in the above sense, there are also few operators on
$C(\overline{V})$ for $V$ open (since for such a closed set it is
possible to define an appropriate function $h$ as in the proof). In
general, one cannot replace closures of open sets by general closed
sets: it is shown in \cite{Fa} that under CH, there are compact
$K$'s as above which contain $\beta \N$ (and, of course, there are
many operators on $C(\beta \N)\equiv \ell_\infty$). On the other
hand, using the set-theoretic principle $\diamondsuit$, it is also
shown in \cite{Fa} that there are $K$'s such that for every infinite
closed $K'\subseteq K$, all operators on the space $C(K')$ are weak
multipliers.
\end{remark}

\begin{corollary}\label{cor-operators-between-disjoint-sets}
Let $K$ be a compact space, let $V_0$, $V_1$, and $V_2$ be open
nonempty subsets of $K$ such that $\overline{V}_0\subseteq V_1$ and
$\overline{V}_1\cap\overline{V}_2=\emptyset$, and let $R:
C(\overline{V}_2) \longrightarrow C(\overline{V}_1)$ be a linear
operator. Suppose that all operators on $C(K)$ are weak multipliers.
Then, $P_{\overline{V}_0}R$ is weakly compact.
\end{corollary}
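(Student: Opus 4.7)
The plan is to derive the corollary as a direct consequence of Proposition~\ref{operators-on-open-of-assymetric}, exploiting the disjointness $\overline{V}_1\cap \overline{V}_2=\emptyset$ to annihilate the multiplier part of the decomposition. The only preparation needed is to enlarge the open set $V_0$ given in the corollary slightly, so that the support condition required by the proposition can be applied to $P_{\overline{V}_0}^*(\mu)$.

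First, using normality of the compact space $K$, I pick an open set $\widetilde{V}\subseteq K$ satisfying $\overline{V}_0\subseteq \widetilde{V}\subseteq \overline{\widetilde{V}}\subseteq V_1$. I then apply Proposition~\ref{operators-on-open-of-assymetric} to the operator $R:C(\overline{V}_2)\to C(\overline{V}_1)$, with the given $V_1$ and $V_2$ and with $\widetilde{V}$ playing the role of the proposition's $V_0$. This yields a Borel function $g:\overline{V}_1\to\R$ whose support is contained in $\overline{V}_1\cap \overline{V}_2$, together with a weakly compact operator $S:C(\overline{V}_1)^*\to C(\overline{V}_2)^*$, such that $R^*(\mu)=g\mu+S(\mu)$ for every $\mu\in C(K)^*$ with $\supp(\mu)\subseteq \widetilde{V}$. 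The disjointness hypothesis $\overline{V}_1\cap \overline{V}_2=\emptyset$ forces $g\equiv 0$, so the identity simplifies to $R^*(\mu)=S(\mu)$ for all such $\mu$.

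To finish, I take an arbitrary $\mu\in C(\overline{V}_0)^*$ and observe that $P_{\overline{V}_0}^*(\mu)\in C(\overline{V}_1)^*$ is naturally identified with the extension of $\mu$ by zero, hence is supported in $\overline{V}_0\subseteq \widetilde{V}$. Consequently $R^*P_{\overline{V}_0}^*(\mu)=S\,P_{\overline{V}_0}^*(\mu)$ for every $\mu$, so the adjoint $(P_{\overline{V}_0}R)^*=R^*P_{\overline{V}_0}^*=S\,P_{\overline{V}_0}^*$ factors through the weakly compact operator $S$ and is therefore itself weakly compact. Gantmacher's theorem then gives the weak compactness of $P_{\overline{V}_0}R$ itself. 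The only (minor) obstacle is the mild mismatch between the \emph{open} support condition of the proposition and the closed set $\overline{V}_0$ on which $P_{\overline{V}_0}^*(\mu)$ is naturally concentrated; interposing the auxiliary open set $\widetilde{V}$ via normality is exactly what resolves this.
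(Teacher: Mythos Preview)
Your argument is correct and follows essentially the same route as the paper: apply Proposition~\ref{operators-on-open-of-assymetric}, use $\overline{V}_1\cap\overline{V}_2=\emptyset$ to kill the multiplier term $g$, and invoke Gantmacher's theorem on $(P_{\overline{V}_0}R)^*=R^*P_{\overline{V}_0}^*=S\,P_{\overline{V}_0}^*$. Your interposition of the auxiliary open set $\widetilde{V}$ is a welcome bit of extra care---the paper's proof simply asserts the conclusion of the proposition for measures supported in $\overline{V}_0$, whereas the proposition as stated requires support in the \emph{open} set $V_0$; your normality argument makes this passage precise.
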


\begin{proof} By Proposition~\ref{operators-on-open-of-assymetric}
there is a weakly compact operator
$S:C^*(\overline{V}_1)\longrightarrow C^*(\overline{V}_2)$ such that
$R^*(\mu)=S(\mu)$ for every measure $\mu$ with support included in
$\overline{V}_0$. In other words, $(P_{\overline{V}_0}R)^*=R^*
P_{\overline{V}_0}^*$ is weakly compact and so, by Gantmacher
theorem, $P_{\overline{V}_0}R$ is weakly compact.
\end{proof}

The following result is an easy consequence of the
Dieudonn\'{e}-Grothendieck theorem which we state for the sake of
clearness.

\begin{lemma}\label{lemma-weakly-compact-on-singletons}
Let $K$ be a compact space, $X$ a Banach space and
$S:X^*\longrightarrow C(K)^*$ a weakly compact operator. Then,
for every bounded subset $B\subseteq X^*$, the set
$$
\big\{x\in K\,:\, \exists \phi\in B \text{ so that }
S(\phi)(\{x\})\neq0 \big\}
$$
is countable.
\end{lemma}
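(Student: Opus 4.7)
The plan is to invoke the Dieudonn\'{e}-Grothendieck characterization of relatively weakly compact subsets of $M(K)=C(K)^*$. Since $S$ is weakly compact and $B$ is bounded, the image $S(B)$ is a relatively weakly compact subset of $M(K)$; the Dieudonn\'{e}-Grothendieck theorem (in the form which, viewing $M(K)$ as a closed subspace of the space of all countably additive finite Borel measures on $K$, characterizes relative weak compactness by uniform countable additivity) then guarantees that for every sequence $(E_k)_{k\in\N}$ of pairwise disjoint Borel subsets of $K$,
$$
\lim_{k\to\infty}\,\sup_{\phi\in B}\bigl|S(\phi)(E_k)\bigr|=0.
$$

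Next I would stratify the set of interest according to the size of the atom. Write
$$
A=\bigl\{x\in K\,:\, \exists\,\phi\in B \text{ such that } S(\phi)(\{x\})\neq 0\bigr\}=\bigcup_{n\in\N}A_n,
$$
where $A_n=\bigl\{x\in K\,:\, \exists\,\phi\in B \text{ with } |S(\phi)(\{x\})|>1/n\bigr\}$. A countable union of finite sets is countable, so it is enough to show that each $A_n$ is finite.

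Suppose, for contradiction, that for some $n\in\N$ the set $A_n$ is infinite. Pick a sequence of pairwise distinct points $x_1,x_2,\ldots\in A_n$ and corresponding functionals $\phi_k\in B$ with $|S(\phi_k)(\{x_k\})|>1/n$. The singletons $E_k=\{x_k\}$ form a pairwise disjoint sequence of Borel sets, yet $\sup_{\phi\in B}|S(\phi)(E_k)|\geq 1/n$ for every $k\in\N$, contradicting the display above. Therefore each $A_n$ is finite and $A$ is countable. The only delicate point is to apply the version of the Dieudonn\'{e}-Grothendieck theorem that is valid for pairwise disjoint Borel sets (not just pairwise disjoint open sets), which follows from the fact that $M(K)$ sits as a closed subspace of the space of all finite countably additive Borel measures on $K$; beyond this, the argument is routine.
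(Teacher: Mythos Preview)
Your proof is correct and follows essentially the same route as the paper: both arguments note that $S(B)$ is relatively weakly compact, stratify the set in question into the level sets $A_n=\{x:\exists\,\phi\in B,\ |S(\phi)(\{x\})|>1/n\}$ (the paper does this implicitly by picking a single $\eps>0$), and then derive a contradiction with the Dieudonn\'{e}--Grothendieck theorem from the infinitude of some $A_n$. Your write-up is in fact a bit more careful than the paper's, since you make explicit that the singleton version requires the Borel-set (uniform countable additivity) form of Dieudonn\'{e}--Grothendieck rather than the open-set form; the paper simply cites \cite[Theorem~VII.14]{Die} without elaborating.
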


\begin{proof} Suppose that the set
$$
\big\{x\in K\,:\, \exists \phi\in B \text{ so that }
S(\phi)(\{x\})\neq0 \big\}
$$
is uncountable for some bounded set $B\subseteq X^*$. Then, there is
$\eps>0$ so that the set
$$
\big\{x\in K\,:\, \exists \phi\in B \text{ so that }
|S(\phi)(\{x\})|\geq\eps \big\}
$$
is infinite, which contradicts the fact of being $S(B)$
relatively weakly compact by the Dieudonn\'{e}-Grothendieck theorem
(see \cite[Theorem~VII.14]{Die}, for instance).
\end{proof}

\begin{lemma}\label{lemma-reduction-of-operators}
Let $K$ be a compact space, let $V_0$, $V_1$, and $V_2$ be open
nonempty subsets of $K$ compatible with $L$ such that
$\overline{V}_0\subseteq V_1$ and $\overline{V}_1\cap L=\emptyset$,
and let $T:C_E(K\|L)\longrightarrow C_E(K\|L)$ be a linear operator.
Then, there exists an operator $R: C(\overline{V}_1)\longrightarrow
C_E(\overline{V}_2\|L)$ such that
$$
R^*(\phi)|_{\overline{V}_0}=
(T^*P_{\overline{V}_2}^*)(\phi)|_{\overline{V}_0}
$$
for all $\phi\in C_E(\overline{V}_2\|L)^*$.
\end{lemma}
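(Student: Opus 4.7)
The plan is to construct $R$ as the composition $P_{\overline{V}_2}\circ T\circ I_{h,\overline{V}_1}$ for a suitable continuous cutoff $h$, and to verify the required identity by a direct adjoint computation, using the $L$-summand decomposition of Lemma~\ref{lemma-decomposition-dual} to kill the $E^*$-part. To begin, by Urysohn's lemma I would choose a continuous function $h:K\longrightarrow[0,1]$ with $h|_{\overline{V}_0}\equiv 1$ and $\supp(h)\subseteq V_1$, which is possible because $\overline{V}_0\subseteq V_1$. The hypothesis $\overline{V}_1\cap L=\emptyset$ forces $h\equiv 0$ on $L$, so $h$ is constant on $L$; moreover $\overline{V}_1$ is compatible with $L$, so $C_E(\overline{V}_1\|L)=C(\overline{V}_1)$ and the operator $I_{h,\overline{V}_1}:C(\overline{V}_1)\longrightarrow C_E(K\|L)$ is well defined. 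I then set
$$
R:=P_{\overline{V}_2}\circ T\circ I_{h,\overline{V}_1}:C(\overline{V}_1)\longrightarrow C_E(\overline{V}_2\|L),
$$
so that $R^*=I_{h,\overline{V}_1}^*\circ T^*\circ P_{\overline{V}_2}^*$.

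Next I fix $\phi\in C_E(\overline{V}_2\|L)^*$ and write $\mu:=(T^*P_{\overline{V}_2}^*)(\phi)=\mu_0+\mu_E$ according to Lemma~\ref{lemma-decomposition-dual}, with $\mu_0\in C_0(K\|L)^*$ (viewed as a measure on $K\setminus L$) and $\mu_E\in E^*$. By part (b) of Lemma~\ref{lemma-duals-inlusion-multiplication}, for every $\widetilde{f}\in C(\overline{V}_1)$ one has $R^*(\phi)(\widetilde{f})=\mu(h\widetilde{f})$. Since $h$ vanishes on $L$, the function $h\widetilde{f}$ lies in $C_0(K\|L)$ and therefore pairs trivially with $\mu_E$; hence $R^*(\phi)(\widetilde{f})=\int h\widetilde{f}\,d\mu_0$. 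This identifies $R^*(\phi)$, viewed as a Radon measure on $\overline{V}_1$, with the measure $h\mu_0$ restricted to $\overline{V}_1$.

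Finally, since $h\equiv 1$ on $\overline{V}_0\subseteq V_1\subseteq \overline{V}_1$, a further restriction to $\overline{V}_0$ yields
$$
R^*(\phi)|_{\overline{V}_0}=(h\mu_0)|_{\overline{V}_0}=\mu_0|_{\overline{V}_0}=(T^*P_{\overline{V}_2}^*)(\phi)|_{\overline{V}_0},
$$
which is precisely the desired identity. I do not expect any serious obstacle; the only point requiring a little care is to check that $h\widetilde{f}$ kills the $E^*$-component of the decomposition, so that the identification of $R^*(\phi)$ with $h\mu_0$ on $\overline{V}_1$ is a genuine equality of Radon measures and not merely an equality modulo the $L$-summand.
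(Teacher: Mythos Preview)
Your construction of $R=P_{\overline{V}_2}\circ T\circ I_{h,\overline{V}_1}$ is exactly the one in the paper, and your verification is correct. The only difference is cosmetic: the paper checks the identity by testing both sides against continuous functions $f\in C(\overline{V}_1)$ with $\supp(f)\subseteq\overline{V}_0$ (so that $fh=f$), whereas you identify $R^*(\phi)$ explicitly as the measure $(h\mu_0)|_{\overline{V}_1}$ and then restrict; both arguments use in the same way that $h\widetilde f\in C_0(K\|L)$ kills the $E^*$-component.
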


\begin{proof} Take a continuous function $h:K\longrightarrow [0,1]$ satisfying
$h|_{\overline{V}_0}\equiv 1$ and $h|_{(K\setminus{V}_1)}\equiv 0$,
and define the operator $R=P_{\overline{V}_2} T
I_{h,\overline{V}_1}$. Given $\phi \in C_E(\overline{V}_2\|L)^*$ and
$f\in C(\overline{V}_1)$ with $\supp(f)\subseteq \overline{V}_0$, by
parts (b) and (a) of Lemma~\ref{lemma-duals-inlusion-multiplication}
and using the facts $h|_{V_0}\equiv 1$ and
$\supp(f)\subseteq\overline{V}_0$, we can write
\begin{align*}
R^*(\phi)(f)&=I_{h,\overline{V}_1}^*
T^*P_{\overline{V}_2}^*(\phi)(f)= T^*
P_{\overline{V}_2}^*(\phi)(fh)=T^* P_{\overline{V}_2}^*(\phi)(f)
\end{align*}
which finishes the proof.
\end{proof}

We are ready to state and prove the main result of the section.

\begin{theorem}\label{thm-weakmultipliers-ex-non-complex}
Let $K$ be a perfect compact space such that all operators on $C(K)$
are weak multipliers, let $L\subseteq K$ be closed and nowhere
dense, and $E$ a closed subspace of $ C(L)$. Then, $C_E(K\|L)$ is
extremely non-complex.
\end{theorem}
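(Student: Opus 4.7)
The plan is to apply Lemma~\ref{lemma-suf-condition} to the operator $T^2$: it will suffice to show that the set $\{x \in K\setminus L : (T^2)^*(\delta_x)(\{x\}) \geq 0\}$ is dense in $K\setminus L$. Fix then a nonempty open $U \subseteq K\setminus L$; the task is to find one such point in $U$.

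First I would localize: choose nested open sets $V_0 \subset \overline{V}_0 \subset V' \subset \overline{V'} \subset V_1 \subset \overline{V}_1 \subset U$ (so $\overline{V}_1 \cap L = \emptyset$) and a continuous $h \colon K \to [0,1]$ with $h|_{\overline{V'}} \equiv 1$ and $\supp h \subseteq V_1$; put $k = 1 - h^2$. Since $h|_L = 0$ and $k|_L = 1$, the formulas
$$
R_0(f) = h\,T(hf), \qquad \widetilde{R}_0(f) = h\,T^2(hf), \qquad D(f) = h\,T(k\,T(hf))
$$
define bounded endomorphisms of $C(K)$ (each intermediate function stays in $C_E(K\|L)$). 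By the hypothesis on $K$, all three are weak multipliers, so $R_0^* = \widehat{g}\,\Id + \widehat{S}$, $\widetilde{R}_0^* = \widetilde{g}\,\Id + \widetilde{S}$, $D^* = \widehat{d}\,\Id + S_D$ with weakly compact remainders. Squaring the first formula and noting $\widetilde{R}_0 = R_0^2 + D$ yields $\widetilde{g}(x) = \widehat{g}(x)^2 + \widehat{d}(x)$ for all $x$ outside a countable exceptional set (Lemma~\ref{lemma-weakly-compact-on-singletons} kills the diagonal of the weakly compact parts). A short direct calculation, using $h \equiv 1$ on $V_0$ together with the $\ell_1$-decomposition $T^*(\delta_x) = (\mu_x, \psi_x)$ of Lemma~\ref{lemma-decomposition-dual}, shows $\widetilde{R}_0^*(\delta_x)(\{x\}) = (T^2)^*(\delta_x)(\{x\})$ for $x \in V_0$; so it remains only to prove that $\widehat{d}(x) = 0$ on a dense subset of $V_0$.

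The main obstacle is this last step. A further short computation gives, for $x \in V_0$,
$$
D^*(\delta_x)(\{x\}) = \tau_x^{(1)}(\{x\}) + \tau_x^{(2)}(\{x\}),
$$
where $\tau_x^{(1)}$, $\tau_x^{(2)}$ are the measure parts of $T^*(k\mu_x, 0)$ and $T^*(0, \psi_x)$ respectively. Now $k\mu_x$ is supported in $K\setminus \overline{V'}$, and $\psi_x \in E^*$ admits a Hahn--Banach extension $\widetilde{\psi}_x \in M(L)$, also supported off $\overline{V}_0$. I would then apply Lemma~\ref{lemma-reduction-of-operators} with $V_2 = K \setminus \overline{V}_0$ (open and compatible with $L$), obtaining an operator $R \colon C(\overline{V'}) \to C_E(\overline{K\setminus \overline{V}_0}\|L)$ whose adjoint realizes $T^*$ on these inputs after restriction to $\overline{V}_0$. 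Proposition~\ref{operators-on-open-of-assymetric}, applied to $R$ viewed as mapping into $C(\overline{K\setminus\overline{V}_0})$, then writes $R^* = g\,\Id + S$ on measures supported in $K\setminus \overline{V'}$, with the function $g$ supported in $\overline{V'}\setminus V_0$---and this set is disjoint from the supports of both $k\mu_x$ and $\widetilde{\psi}_x$. Therefore the multiplier part contributes nothing to $R^*(k\mu_x)$ or $R^*(\widetilde{\psi}_x)$, and Lemma~\ref{lemma-weakly-compact-on-singletons}, applied to the weakly compact $S$ and the two bounded families $\{k\mu_x\}_{x \in V_0}$, $\{\widetilde{\psi}_x\}_{x \in V_0}$, gives $\tau_x^{(1)}(\{x\}) = \tau_x^{(2)}(\{x\}) = 0$ for every $x \in V_0$ outside a countable set.

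Combining, $(T^2)^*(\delta_x)(\{x\}) = \widehat{g}(x)^2 \geq 0$ for every $x \in V_0$ outside a countable set $C$. Since $K$ is perfect and $L$ is nowhere dense, $K\setminus L$ has no isolated points, so $V_0 \setminus C$ is dense in $V_0$ by the Baire category theorem; in particular it meets $U$. This gives the required density, and Lemma~\ref{lemma-suf-condition} applied to $T^2$ concludes the proof.
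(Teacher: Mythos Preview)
Your argument is correct, and it follows a route that differs in organization from the paper's proof while using the same toolbox (Lemmas~\ref{lemma-suf-condition}, \ref{lemma-weakly-compact-on-singletons}, \ref{lemma-reduction-of-operators} and Proposition~\ref{operators-on-open-of-assymetric}).

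The paper argues by contradiction: assuming $g_{(T^2)^*}<0$ on a whole open set $U_1$, it chooses a $G_\delta$ neighbourhood $\overline{U}_1=\bigcap_n W_n$, uses the regularity of each $T^*(\delta_x)$ to pick $n_0$ so that the mass on the ``buffer'' $W_{n_0}\setminus\overline{U}_1$ is uniformly small for uncountably many $x$, and then splits $T^*(\delta_x)$ into three pieces (near, buffer, far). The near-near term is shown to be $\geq 0$ off a countable set via Proposition~\ref{operators-on-open-of-assymetric}; the far term is killed by Corollary~\ref{cor-operators-between-disjoint-sets}; the buffer term is small by construction. This gives the contradiction.

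Your proof is direct and replaces the buffer-by-regularity step with the algebraic identity $h^2+(1-h^2)=1$: writing $\widetilde{R}_0=R_0^2+D$ gives an exact decomposition of $(T^2)^*(\delta_x)(\{x\})$ (for $x\in V_0$) into a manifestly non-negative term $\widehat g(x)^2$ and a ``cross'' term $\widehat d(x)$, with no $\varepsilon$ left over. You then dispose of $\widehat d$ in one stroke by observing that both $k\mu_x$ and the Hahn--Banach lift $\widetilde\psi_x$ of the $E^*$-component are supported away from $\overline{V'}$, so that after passing through Lemma~\ref{lemma-reduction-of-operators} and Proposition~\ref{operators-on-open-of-assymetric} the multiplier part vanishes identically and only a weakly compact remainder survives, to which Lemma~\ref{lemma-weakly-compact-on-singletons} applies. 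This is tidier: no contradiction, no $G_\delta$ exhaustion, no uncountability pigeonholing to find $n_0$. The price is the slightly more delicate bookkeeping in threading $J^*$ through to identify $(JR)^*(\widetilde\psi_x)$ with $R^*(0,\psi_x)$, which you handle correctly. One small point worth making explicit in a final write-up: ensure the nested sets are chosen with $\overline{V}_1\subsetneq K$ (always possible since $K$ is perfect and we may shrink $U$), so that the ``$V_0$'' in your application of Proposition~\ref{operators-on-open-of-assymetric}, namely $K\setminus\overline{V'}$, is nonempty.
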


\begin{proof} Fixed $T\in L\big(C_E(K\|L)\big)$,
we have to show that its square satisfies the Daugavet
equation. By Lemma~\ref{lemma-suf-condition}, it is enough to
prove that the set $\{x\in K\setminus L \ : \
g_{(T^2)^*}(x)\geq 0\}$ is dense in $K\setminus L$. To do so,
we proceed ad absurdum: suppose that there is an open set
$U_1\subseteq K$ such that $\overline{U}_1\cap L=\emptyset$ and
$g_{(T^2)^*}(x)<0$ for each $x\in U_1$. By going to a subset,
we may w.l.o.g. assume that $\overline{U}_1$ is a $G_\delta$
set. Therefore, we can find open sets $W_n\subseteq K$ such
that $\bigcap_{n\in\N} W_n=\overline{U}_1$,
$\overline{W}_{n+1}\subseteq W_n$, and $K\setminus W_n$ is the
closure of an open set containing $L$ for every $n\in \N$.
Next, we fix a nonempty open set $U_0\subseteq K$ with
$\overline{U}_0\subseteq U_1$, and we observe that it is
uncountable (since $K$ is perfect) so, there is $\eps>0$ such
that the set
$$
A=\{ x\in U_0\,:\, g_{(T^2)^*}(x)<-\eps\}
$$
is uncountable. Moreover, we claim that there is $n_0\in \N$ such
that the set
$$
B=\left\{ x\in A \,:\,
|T^*(\delta_x)|(W_{n_0}\setminus\overline{U}_1))<\frac{\eps}{2\|T\|}\right\}
$$
is uncountable. Indeed, fixed $x\in A$, the regularity of the
measure $T^*(\delta_x)$ implies that there is $n\in\N$ so that
$|T^*(\delta_x)|(W_{n}\setminus\overline{U}_1))<\frac{\eps}{2\|T\|}$
which gives us
$$
A=\bigcup_{n\in\N} \left\{ x\in A \,:\,
|T^*(\delta_x)|(W_{n}\setminus\overline{U}_1))<\frac{\eps}{2\|T\|}\right\}
$$
and the uncountability of $A$ finishes the argument.

For $x\in B$, we write
$$
\phi_x=T^*(\delta_x)|_{K\setminus(L\cup W_{n_0})}+T^*(\delta_x)_E\in
C_E\big((K\setminus W_{n_0})\|L\big)^*
$$
and we can decompose $T^*(\delta_x)$ as follows:
$$
T^*(\delta_x)=T^*(\delta_x)|_{\overline{U}_1}+
T^*(\delta_x)|_{W_{n_0}\setminus\overline{U}_1}+\phi_x\,.
$$
Hence, for every $x\in B$, we get
\begin{equation}\label{eq:CEKL-extremelynoncomplex1}
-\varepsilon>\bigl[(T^*)^2(\delta_x)\bigr](\{x\})=
T^*\bigl[T^*(\delta_x)|_{\overline{U}_1}+
T^*(\delta_x)|_{W_{n_0}\setminus\overline{U}_1}+\phi_x\bigr]\{x\}.
\end{equation}
However, the following claims show that this is impossible.

\noindent\emph{Claim~$1$}. $\bigl\|T^*\bigl[
 T^*(\delta_x)|_{W_{n_0}\setminus\overline{U}_1}\bigr]\bigr\|<\varepsilon/2$.

\noindent\emph{Proof of claim~$1$}. It follows obviously from the
choice of $n_0$.

\noindent\emph{Claim~$2$}. The function $x\longmapsto
T^*(T^*(\delta_x)|_{\overline{U}_1})(\{x\})$ is non-negative
for all but countably many $x\in B$.

\noindent\emph{Proof of claim~$2$}. By
Lemma~\ref{lemma-reduction-of-operators} applied to $V_0=U_1$,
$V_1=W_{n_0}$ and $V_2=U_1$, we obtain an operator $R:
C(\overline{W}_{n_0})\longrightarrow C(\overline{U}_1)$ such
that
$$
R^*(\psi)|_{\overline{U}_1}=T^*P_{\overline{U}_1}^*(\psi)|_{\overline{U}_1}
$$
for $\psi\in C(\overline{U}_1)^*$. On the other hand, by
Proposition~\ref{operators-on-open-of-assymetric} applied to
$R$ and $V_0=U_0$, $V_1=U_1$, $V_2=W_{n_0}$, we get a bounded
Borel function $g:\overline{U}_1\longrightarrow \R$ and a
weakly compact operator $S:C(\overline{U}_1)^*\longrightarrow
C(\overline{W}_{n_0})^*$ such that
$$
R^*(\mu)=g\mu+S(\mu)
$$
for every $\mu$ with support in $U_0$. In particular, for $x\in B$
we have
$$
T^*(\delta_x)|_{\overline{U}_1}
=T^*P_{\overline{U}_1}^*(\delta_x)|_{\overline{U}_1}=R^*(\delta_x)|_{\overline{U}_1}=
g\delta_x+
S(\delta_x)|_{\overline{U}_1}=g(x)\delta_x+S(\delta_x)|_{\overline{U}_1}
$$
and using this twice, we get
\begin{align*}
T^*(T^*(\delta_x)|_{\overline{U}_1})(\{x\}) &
=T^*\Big[g(x)\delta_x+S(\delta_x)|_{\overline{U}_1}\Big](\{x\}) =
g(x)T^*(\delta_x)(\{x\})+ T^*\big[ S(\delta_x)|_{\overline{U}_1}\big](\{x\})\\
&=g(x)T^*(\delta_x)|_{\overline{U}_1}(\{x\})+R^*\big[ S(\delta_x)|_{\overline{U}_1}\big](\{x\})\\
&=g(x)^2+g(x)S(\delta_x)|_{\overline{U}_1}(\{x\})+\big(gS(\delta_x)|_{\overline{U}_1}\big)(\{x\})
+S\big[ S(\delta_x)|_{\overline{U}_1}\big](\{x\})\\
&=g(x)^2+(2gP^{\overline{U}_1} S)(\delta_x)(\{x\})+(S
P^{\overline{U}_1} S)(\delta_x)(\{x\})
\end{align*}
for every $x\in B$. Finally, since $gP^{\overline{U}_1} S$ and $S
P^{\overline{U}_1} S$ are weakly compact operators, we conclude by
Lemma~\ref{lemma-weakly-compact-on-singletons} that
$(gP^{\overline{U}_1} S)(\delta_x)(\{x\})$ and $(S
P^{\overline{U}_1} S)(\delta_x)(\{x\})$ are zero for all but
countably many $x$, completing the proof of the claim.

\noindent\emph{Claim~$3$}. $T^*(\phi_x)(\{x\})=0$ for all but
countably many $x\in U_0$.

\noindent\emph{Proof of claim~$3$}. By
Lemma~\ref{lemma-reduction-of-operators} applied to $V_0=U_0$,
$V_1=U_1$, and $V_2=K\setminus \overline{W}_{n_0+1}$ we obtain an
operator $R: C(\overline{U}_1)\longrightarrow C_E\big((K\setminus
W_{n_0+1})\|L\big)$ such that $R^*(\phi)(\{x\})=T^*P_{K\setminus
W_{n_0}}^*(\phi)(\{x\})$ for $x\in U_0$ and $\phi\in
C_E\big((K\setminus W_{n_0+1})\|L\big)^*$. We denote
$J:C_E\big((K\setminus W_{n_0+1})\|L\big)\longrightarrow
C(K\setminus W_{n_0+1})$ the inclusion operator and we apply
Corollary~\ref{cor-operators-between-disjoint-sets} for the operator
$JR$ and the open sets $V_0=K\setminus\overline{W}_{n_0}$,
$V_1=K\setminus\overline{W}_{n_0+1}$, and $V_2=U_1$ to obtain that
the operator $P_{K\setminus W_{n_0}}JR$ is weakly compact.

Besides, we recall that $\phi_x\in C_E\big((K\setminus
W_{n_0}\|L)\big)^*$ and that it can be viewed as an element of
$C_E\big((K\setminus W_{n_0+1}\|L)\big)^*$ by just extending it by
zero outside $K\setminus W_{n_0}$. For $x\in U_0$ we take
$\widetilde{\phi}_x$ a Hahn-Banach extension of $\phi_x$ to
$C(K\setminus W_{n_0})$ and we observe that $J^*P_{K\setminus
W_{n_0}}^*(\widetilde{\phi}_x)=\phi_x$. Indeed, for $f\in
C_E\bigl((K\setminus W_{n_0+1})\|L\bigr)$ we have that
\begin{align*}
J^*P_{K\setminus
W_{n_0}}^*(\widetilde{\phi}_x)(f)&=\widetilde{\phi}_x\bigl(P_{K\setminus
W_{n_0}}(Jf)\bigr)=\widetilde{\phi}_x\bigl(P_{K\setminus
W_{n_0}}(f)\bigr)\\&=\widetilde{\phi}_x\bigl(f|_{K\setminus
W_{n_0}}\bigr)=\phi_x\bigl(f|_{K\setminus W_{n_0}}\bigr)=\phi_x(f).
\end{align*}
Therefore, for $x\in U_0$, we can write
\begin{align*}
T^*(\phi_x)(\{x\})&=T^*P_{K\setminus
W_{n_0}}^*(\phi_x)(\{x\})=R^*(\phi_x)(\{x\})\\&=R^*J^*P_{K\setminus
W_{n_0}}^*(\widetilde{\phi}_x)(\{x\})=(P_{K\setminus
W_{n_0}}JR)^*(\widetilde{\phi}_x)(\{x\})
\end{align*}
where we are identifying $\phi_x$ with its extension by zero to $K$.
Now the proof of the claim is finished by just applying
Lemma~\ref{lemma-weakly-compact-on-singletons} to the operator
$P_{K\setminus W_{n_0}}JR$.

Finally, the claims obviously contradict
\eqref{eq:CEKL-extremelynoncomplex1} completing the proof of the
theorem.
\end{proof}

When $E=\{0\}$, we get a sufficient condition to get that a space
of the form $C_0(K\setminus L)$ is extremely non-complex.

\begin{corollary}
Let $K$ be a compact space such that all operators on $C(K)$
are weak multipliers. Suppose $L\subseteq K$ is closed and
nowhere dense. Then, $C_0(K\setminus L)$ is extremely
non-complex.
\end{corollary}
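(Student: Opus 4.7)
My plan is to derive this corollary as a direct specialization of Theorem~\ref{thm-weakmultipliers-ex-non-complex} to the trivial subspace $E = \{0\} \subseteq C(L)$, so there is essentially no new work to do once we match up the notations.

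First, I would unwind definitions. With $E = \{0\}$, the space $C_E(K\|L)$ introduced in Section~\ref{sec:notation-CEKL} becomes
\[
C_{\{0\}}(K\|L) = \{f \in C(K) \, : \, f|_L = 0\},
\]
which is precisely the space $C_0(K\|L)$. As recalled at the start of Section~\ref{sec:notation-CEKL} following Semadeni, this space is in turn isometrically isomorphic to $C_0(K\setminus L)$, the continuous functions on the locally compact space $K\setminus L$ vanishing at infinity.

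Second, I would check that the hypotheses of Theorem~\ref{thm-weakmultipliers-ex-non-complex} are met in our setting: $K$ is a (perfect) compact space all of whose operators are weak multipliers, $L\subseteq K$ is closed and nowhere dense, and $E=\{0\}$ is trivially a closed subspace of $C(L)$. The theorem then yields that $C_{\{0\}}(K\|L)$ is extremely non-complex, i.e.\ $\|\Id+T^2\|=1+\|T^2\|$ for every $T\in L\bigl(C_{\{0\}}(K\|L)\bigr)$, which, via the identification above, is exactly the desired statement for $C_0(K\setminus L)$.

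I do not foresee any real obstacle: all the heavy lifting, namely the density argument routed through Lemma~\ref{lemma-suf-condition} and the three claims bounding the pieces of $T^*(\delta_x)$ corresponding to $\overline{U}_1$, $W_{n_0}\setminus\overline{U}_1$, and the complement of $W_{n_0}$, is already carried out in the proof of the theorem. The corollary is just the case in which the $E^*$-summand of the dual decomposition in Lemma~\ref{lemma-decomposition-dual} disappears, so the argument of the theorem collapses to its simplest form without any additional verification.
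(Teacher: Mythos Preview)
Your proposal is correct and matches the paper's approach exactly: the corollary is stated immediately after Theorem~\ref{thm-weakmultipliers-ex-non-complex} with the preface ``When $E=\{0\}$, we get\ldots'', and no separate proof is given, so specializing the theorem to $E=\{0\}$ and invoking the identification $C_{\{0\}}(K\|L)=C_0(K\|L)\equiv C_0(K\setminus L)$ is precisely what is intended. Your parenthetical ``(perfect)'' is apt, since the theorem carries that hypothesis even though the corollary's statement omits it.
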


To show that there are extremely non-complex spaces of the form
$C_E(K\|L)$ which are not isomorphic to the $C(K')$ spaces, we
need the following (well-known) result which allows us to
construct spaces $C_E(K\|L)$ for every perfect separable compact
space $K$ and every separable Banach space $E$.

\begin{lemma}\label{lemma:LcontinuouslymappedontoCantorset}
Let $K$ be a perfect compact space. Then:
\begin{enumerate}
\item[(a)] There is a nowhere dense closed subset $L\subset
    K$ such that $L$ can be continuously mapped onto the
    Cantor set.
\item[(b)] Therefore, every separable Banach space $E$ is
    (isometrically isomorphic to) a subspace of $C(L)$.
\end{enumerate}
\end{lemma}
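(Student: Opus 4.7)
The plan is to construct $L$ as a closed nowhere-dense subset of a Cantor copy sitting inside $K$, then derive (b) by pulling back the universal Banach--Mazur embedding into $C(2^\omega)$.

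For (a), I first invoke the standard fact that every nonempty perfect compact Hausdorff space contains a closed subspace homeomorphic to $2^\omega$: build a Cantor scheme $\{F_s:s\in 2^{<\omega}\}$ of nonempty closed subsets of $K$, where $F_{s0},F_{s1}$ are disjoint closed subsets of $\operatorname{int}(F_s)$ (this is possible because $K$ is perfect, so $\operatorname{int}(F_s)$ is infinite, and two distinct points admit disjoint closed neighborhoods inside $\operatorname{int}(F_s)$ by normality of $K$), and refine so that each branch-intersection $\bigcap_n F_{\sigma|n}$ collapses to a single point; then the compact set $K'=\bigcap_n\bigcup_{s\in 2^n}F_s$ is homeomorphic to $2^\omega$. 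Let $\psi\colon K'\to 2^\omega$ be any such homeomorphism, and take
$$L=\psi^{-1}\bigl(\{\sigma\in 2^\omega:\sigma_{2n}=0\text{ for all }n\}\bigr),$$
i.e.\ the preimage under $\psi$ of the closed subset of $2^\omega$ whose even-indexed coordinates all vanish. Then $L$ is closed, and the map $\pi\colon L\to 2^\omega$ obtained by projecting $\psi(L)$ onto its odd coordinates is a continuous surjection. Moreover $L$ is nowhere dense in $K'$: any basic clopen cylinder $\{\sigma:\sigma|m=t\}$ of $K'\cong 2^\omega$ contains a point with a $1$ at some even coordinate $\geq m$, which lies outside $L$. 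Finally, nowhere-density in $K'$ transfers to $K$, since any $K$-open subset of $L\subseteq K'$ is automatically $K'$-open, forcing $\operatorname{int}_K(L)\subseteq\operatorname{int}_{K'}(L)=\emptyset$.

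The main technical point is the refinement of the Cantor scheme so that branch-intersections are singletons when $K$ is possibly non-metrizable; this is a classical result (due essentially to Alexandroff--Urysohn) that uses repeated applications of normality at each splitting step.

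For (b), the Banach--Mazur theorem produces an isometric embedding $E\hookrightarrow C(2^\omega)$. Composing with the pullback $g\mapsto g\circ\pi$ from $C(2^\omega)$ into $C(L)$, which is an isometric embedding because $\pi$ is a continuous surjection (so $\|g\circ\pi\|_\infty=\|g\|_\infty$), yields the required isometric embedding $E\hookrightarrow C(L)$.
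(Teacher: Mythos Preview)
Your argument has a genuine gap: the assertion that every perfect compact Hausdorff space contains a homeomorphic copy of $2^\omega$ is false. The Alexandroff--Urysohn result you invoke holds for perfect Polish (or completely metrizable) spaces, where one can shrink diameters to force the branch-intersections to be singletons; in a general compact Hausdorff space this refinement step cannot be carried out. Concretely, since you require $F_{s^\frown i}\subseteq\operatorname{int}(F_s)$, each branch-intersection $\bigcap_n F_{\sigma|n}=\bigcap_n \operatorname{int}(F_{\sigma|n})$ is a $G_\delta$ set, so if it were a singleton that point would be $G_\delta$ and hence a point of first countability. A perfect compact Hausdorff space need not have any such points: $\beta\N\setminus\N$ is perfect, compact, Hausdorff, and has no nontrivial convergent sequences, hence no infinite compact metrizable subspace and in particular no copy of the Cantor set. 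More to the point, the very spaces $K$ this paper is built around (those for which every operator on $C(K)$ is a weak multiplier) have $C(K)$ Grothendieck and therefore admit no nontrivial convergent sequences either; they contain no Cantor set, so your construction fails precisely in the cases of interest.

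The paper's proof avoids this obstacle by never attempting to embed $2^\omega$ into $K$. It builds a tree of open sets $U_s$, takes $L$ to be the set of accumulation points of a countable family of witness points $y_s$, and then produces a continuous \emph{surjection} $f|_L:L\to\{0,1\}^{\N}$ directly via Urysohn functions $f_n$ separating the level-$n$ pieces. Only a surjection onto the Cantor set is claimed, not a homeomorphism, and that is all the lemma needs for part~(b). Your argument for (b), incidentally, is fine and matches the paper's; the problem is entirely in the first step of (a).
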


\begin{proof}
(a). As $K$ is perfect, given an nonempty open subset $U$ in $K$
and $x\in U$, there are two nonempty open subsets $V_1, V_2$ of
$U$ such that
$$
\overline{V}_1\cap \overline{V}_2=\emptyset \qquad \text{and}
\qquad x\not\in \overline{V}_i \ (i=1,2).
$$
This allows us to construct a family of open sets $U_s$ for $s\in
\{0,1\}^{<\omega}$ such that
$$
U_{\emptyset}=K, \quad
\overline{U}_{s^\frown0} \cap \overline{U}_{s^\frown1}=\emptyset, \quad
\overline{U}_{s^\frown0}, \overline{U}_{s^\frown1} \subseteq
U_{s}, \quad \text{ and } \quad
U_{s}\setminus[\overline{U}_{s^\frown0}\cup \overline{U}_{s^\frown1}]\not=\emptyset.
$$
Take any point $y_{s}$ in the above difference. Define  $L$ to be
the set of all the accumulation points of the set  $\{y_s: s\in
\{0,1\}^{<\omega}\}$.

For $n\in \N$, let $f_n:K\longrightarrow [0,1]$ be such continuous
functions that for all $s\in \{0,1\}^n$ we have
$f_n[\overline{U}_{s^\frown0}]=\{0\}$ and
$f_n[\overline{U}_{s^\frown1}]=\{1\}$ which can be easily obtained
since $\overline{U}_{s}\cap \overline{U}_{s'}=\emptyset$ if
$s,s'\in \{0,1\}^n$ are distinct. Let $f:K\longrightarrow
[0,1]^{\N}$ be defined by $f(x)(n)=f_n(x)$. We claim that $f|_L$
satisfies the lemma. One can easily check that $f$ is continuous
since $f_n$s are continuous.

Because each $\overline{U}_s$ contains infinitely many points
$y_t$, we have that $L\cap \overline{U}_s\not=\emptyset$ for each
$s\in \{0,1\}^{<\omega}$. Note that if $x\in \overline{U}_s$ and
$s\in \{0,1\}^n$, then $f(x)|\{0,..., n-1\}=s$. So, as the image
of $L$ under $f$, is closed, it contains $\{0,1\}^{\N}$.

On the other hand if $x\in L$, then for each $n\in \N$ there is
$s\in \{0,1\}^{n}$ such that $x\in \overline{U}_s$, this is
because $K\setminus \bigcup\{\overline{U}_{s}\,:\,|s|=n\}$
contains only finitely many points $y_t$ and hence no element of
$L$. Thus $f_n(x)\in \{0,1\}$ if $x\in L$ which gives that
$f[L]\subseteq \{0,1\}^{\N}$, which together with the previous
observation gives that $f[L]=\{0,1\}^{\N}$.

Finally let us prove that  $L$ has empty interior, and so, as a
closed set, it is nowhere dense.  It is enough to see that $L$ has
empty interior in the subspace $\{y_s: s\in
\{0,1\}^{<\omega}\}\cup L$. This is true since $\{y_s: s\in
\{0,1\}^{<\omega}\}$ is dense and open in $\{y_s: s\in
\{0,1\}^{<\omega}\}\cup L$,
 as each
point $y_s$ is isolated by $U_{s}\setminus
[\overline{U}_{s^\frown0} \cup \overline{U}_{s^\frown1}]$ from the
remaining points.

(b). Since the function $f|_L:L\longrightarrow 2^\omega$ of the
above item is continuous and surjective, $C(2^\omega)$ embeds
isometrically into $C(L)$ by just composing every element in
$C(2^\omega)$ with $f|_L$. Since every separable Banach space $E$
embeds isometrically into $C(2^\omega)$ (Banach-Mazur theorem), we
get $E\subseteq C(L)$ isometrically.
\end{proof}

\begin{remark} If $K$ is a compact space such that all
operators on $C(K)$ are weak multipliers, then it is easier to prove
the existence of $L\subseteq K$ closed nowhere dense which maps onto
$[0,1]$ giving also (b) above. Indeed, $C(K)$ is a Grothendieck
space by \cite[Theorem~2.4]{Koszmider}, so $K$ has no convergent
sequence (otherwise it would give rise to a complemented copy of
$c_0$ contradicting the Grothendieck property). Now, take any
discrete sequence $\{x_n\,:\,n\in \N\}\subseteq K$ and consider the
set $L$ of all its accumulation points. Then, $L$ is perfect because
an isolated point would produce a convergent subsequence of
$\{x_n\,:\,n\in \N\}$, so $L$ continuously maps onto $[0,1]$
\cite[Theorem~8.5.4]{Sem}. To see that $L$ is nowhere dense we use
the discreteness of $\{x_n\,:\,n\in \N\}$. If $U\subseteq K$ is open
and intersects $L$, then there is $n_0\in \N$ such that $x_{n_0}\in
U$; but by the discreteness of $\{x_n\,:\,n\in N\}$, there is an
open neighborhood $V$ of $x_{n_0}$ not containing the remaining
$x_n$'s and hence, disjoint from $L$. Therefore, $V\cap U$ is an
open subset of $U$ disjoint with $L$, proving that $L$ is nowhere
dense.
\end{remark}

Now, we take a perfect compact space $K$ such that every
operator on $C(K)$ is a weak multiplier \cite{Koszmider}, and
we use Lemma~\ref{lemma:LcontinuouslymappedontoCantorset} to
find a nowhere dense closed subset $L$ such that $C(L)$
contains isometric copies of every separable Banach space.
Then, for every $E\subset C(L)$, $C_E(K\|L)$ is extremely
non-complex by Theorem~\ref{thm-weakmultipliers-ex-non-complex}
and $C_E(K\|L)^*=C_0(K\|L)^*\oplus_1 E^*$ by
Lemma~\ref{lemma-decomposition-dual}. If $E$ is
infinite-dimensional and reflexive, $C_E(K\|L)$ is not
isomorphic to a $C(K')$ space, since $C(K')^*$ never contains
complemented infinite-dimensional reflexive subspaces (see
\cite[Proposition~5.6.1]{Albiac-Kalton}, for instance). Let us
state all what we have proved.

\begin{example}$ $
\begin{enumerate}
\item[(a)] {\slshape For every separable Banach space $E$,
    there is an extremely non-complex Banach space
    $C_E(K\|L)$ such that $E^*$ is an $L$-summand in
    $C_E(K\|L)^*$.\ }
\item[(b)] If $E$ is infinite-dimensional and reflexive,
    then such $C_E(K\|L)$ is not isomorphic to any $C(K')$
    space.
\item[(c)] Therefore, {\slshape there are extremely
    non-complex Banach spaces which are not isomorphic to
    $C(K)$ spaces.\ }
\end{enumerate}
\end{example}

We finish the section commenting that some $C(K')$ spaces with
many operators which can be viewed as $C_E(K\|L)$ spaces where
$C(K)$ has few operators and for which our previous results
apply.

\begin{remark}\label{view-as-cofk} Let $L\subseteq K$ be a
nowhere dense subset of a compact $K$ as before. Consider the
topological quotient map $q: K\longrightarrow K_L$, where $K_L$ is
obtained from $K$ by identifying all points of $L$ to one point. The
canonical isometric embedding $I_q$ of $C(K_L)$ into $C(K)$ defined
by $I_q(f)=f\circ q$ has the image equal to the subspace of $C(K)$
consisting of all functions constant on $L$. Thus $C(K_L)$ is
isometric to $C_E(K\|L)$, where $E$ is the subspace of $C(L)$ of all
constant functions.  Hence, by the results above, if all operators
on $C(K)$ are weak multipliers, then all spaces of the form $C(K_L)$
are extremely non-complex. It turns out that the spaces of
\cite{KMM} can be realized as spaces of this form. In particular,
there are extremely non-complex spaces of the form $C_E(K\|L)$ which
have many operators besides weak multipliers. For example, take $K$
such that all operators on $C(K)$ are weak multipliers. Choose a
discrete sequence $(x_n)$ in $K$ and let $L$ be the set of its
accumulation points. Then the sequence $(x_n)$ has a unique
accumulation point in $K_L$, that is, it is a convergent sequence.
By a well known fact, this means that $C(K_L)\equiv C_E(K\|L)$ has a
complemented copy of $c_0$ and so is not Grothendieck, hence it has
more operators than weak multipliers by results of \cite{Koszmider}
(actually, many operators which are not weak multipliers can be
directly obtained from automorphisms of the complemented copy of
$c_0$ generated by permutations of the natural numbers).
\end{remark}

\section{Isometries on extremely non-complex spaces}\label{sec:isometries-on-X}

The following result shows that the group of isometries of an
extremely non complex Banach space is a discrete Boolean group.

\begin{theorem}\label{thm-isometries-form-boolean-group}
Let $X$ be an extremely non-complex Banach space. Then
\begin{enumerate}
\item[(a)] If $T\in \iso(X)$, then $T^2=\Id$.
\item[(b)] As a consequence, for every $T_1,T_2\in
    \iso(X)$, $T_1T_2=T_2T_1$.
\item[(c)] For every $T_1,T_2\in \iso(X)$, $\|T_1-T_2\|\in
    \{0,2\}$.
\end{enumerate}
\end{theorem}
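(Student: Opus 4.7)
The plan is to focus on (a), as the remaining parts follow quickly. Once $T^2=\Id$ for every $T\in\iso(X)$ is established, part (b) is immediate: for $T_1,T_2\in\iso(X)$ the product $T_1T_2$ lies in $\iso(X)$, so $(T_1T_2)^2=\Id$, and combining with $T_i^{-1}=T_i$ this gives $T_1T_2=(T_1T_2)^{-1}=T_2^{-1}T_1^{-1}=T_2T_1$. For (c), the factorisation $T_1-T_2=T_1(\Id-T_1T_2)$ together with $T_1\in\iso(X)$ reduces the question to showing $\|\Id-T\|\in\{0,2\}$ for $T:=T_1T_2$. Since $T^2=\Id$, the operator $P:=(\Id-T)/2$ satisfies $P^2=(\Id-2T+T^2)/4=P$, hence is a projection; so either $P=0$ (whence $\|\Id-T\|=0$) or $\|P\|\geq 1$ (whence $\|\Id-T\|=2\|P\|\geq 2$, and the trivial bound $\|\Id-T\|\leq 2$ forces equality).

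The heart of the argument is (a). The trick is to consider $A:=T+T^{-1}$ and $B:=T-T^{-1}$, which satisfy the algebraic identity
$$
A^2-B^2=4\,\Id,
$$
since $TT^{-1}=T^{-1}T=\Id$. Applying the extreme non-complexity of $X$ to the operator $B/2$ gives $\|\Id+B^2/4\|=1+\|B^2\|/4$, which rearranges via the identity to $\|A^2\|=4+\|B^2\|$. But $\|A\|\leq\|T\|+\|T^{-1}\|=2$ forces $\|A^2\|\leq 4$, so $B^2=0$. Left multiplication by $T^2$ then produces $(T^2-\Id)^2=0$.

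Set $C:=T^2-\Id$, so $C^2=0$ and $\|\Id+C\|=\|T^2\|=1$. It remains to amplify nilpotency into vanishing, which I would do by induction, showing that $\|\Id+2^nC\|=1$ for every $n\geq 0$. The base case is the equality just noted. For the step, set $S_n:=\Id+2^nC$; nilpotency of $C$ gives $S_n^2=\Id+2^{n+1}C$, and applying the extreme non-complexity hypothesis to $S_n$ yields
$$
2\,\|\Id+2^nC\|=\|\Id+S_n^2\|=1+\|S_n^2\|=1+\|\Id+2^{n+1}C\|,
$$
so the induction hypothesis collapses this to $\|\Id+2^{n+1}C\|=1$. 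The reverse triangle inequality then gives $\|2^nC\|\leq 2$, i.e.\ $\|C\|\leq 2^{1-n}$ for every $n$, whence $C=0$ and $T^2=\Id$.

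The main obstacle is the very first step: spotting the algebraic identity $A^2-B^2=4\,\Id$ which couples the isometric nature of $T$ to the extreme non-complexity hypothesis. The choice $B=T-T^{-1}$ is natural (it vanishes exactly when $T$ is an involution), but what makes the argument succeed is the sharp coincidence that the a priori bound $\|A\|\leq 2$ saturates the lower bound $\|A^2\|\geq 4$ forced by the identity $\|A^2\|=4+\|B^2\|$. Once $(T^2-\Id)^2=0$ is in hand the doubling induction is essentially routine, although one must remember that the clean formula $S_n^2=\Id+2^{n+1}C$ relies crucially on the cancellation $C^2=0$.
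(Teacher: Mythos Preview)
Your proof is correct. The opening move---applying extreme non-complexity to $(T-T^{-1})/2$ (your $B/2$, the paper's $S/\sqrt{2}$) to obtain $(T-T^{-1})^2=0$---is exactly what the paper does, and your treatments of (b) and (c) are essentially the paper's arguments rephrased (the paper writes the projection computation as $2\|\Id-T\|=\|(\Id-T)^2\|\leq\|\Id-T\|^2$, which is your $P^2=P$ in disguise).

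The genuine difference is how you finish (a). From $B^2=0$ one has $\Id=\tfrac12 T^2+\tfrac12 T^{-2}$; the paper then invokes the classical fact that $\Id$ is an extreme point of the closed unit ball of $L(X)$ (citing Sakai) to conclude immediately that $T^2=\Id$. You instead pass to $C=T^2-\Id$, note $C^2=0$, and run a doubling induction---applying extreme non-complexity to $S_n=\Id+2^nC$ at each step---to force $\|C\|\leq 2^{1-n}$ for every $n$. Your route is longer but entirely self-contained: it never leaves the hypothesis of the theorem, whereas the paper imports an external (if standard) structural fact about unital normed algebras. The paper's route, on the other hand, is a one-liner once that fact is granted, and it uses extreme non-complexity only once rather than infinitely often.
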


\begin{proof}
(a). Given $T\in \iso(X)$, we define the operator
$S=\frac{1}{\sqrt{2}}\bigl(T-T^{-1}\bigr)$ and we observe that
$S^2= \frac{1}{2}\,T^2-\Id+\frac{1}{2}\,T^{-2}$. Since $X$ is
extremely non-complex, we get
$$
1+\|S^2\|=\|\Id+S^2\|=\left\|\frac{1}{2}\,T^2+\frac{1}{2}\,T^{-2}\right\|\leq
1
$$
and, therefore, $S^2=0$. This gives us that $\Id=\frac12
T^2+\frac12 T^{-2}$. Finally, since $\Id$ is an extreme point
of $L(X)$ (see \cite[Proposition~1.6.6]{sakai}, for instance)
and $\|T^2\|\leq 1$, $\|T^{-2}\|\leq 1$, we get $T^2=\Id$.

(b). Commutativity comes routinely from the first part since
$T_1T_2\in \iso(X)$, so
$$
\Id=\bigl(T_1 T_2\bigr)^2=T_1 T_2 T_1 T_2
$$
which finishes the proof by just multiplying by $T_1$ from the left
and by $T_2$ from the right.

(c). We start observing that $\|\Id-T\|\in\{0,2\}$ for every
$T\in \iso(X)$. Indeed, from (a) we have
$$
\bigl(\Id-T\bigr)^2=\Id+\Id-2T=2(\Id-T),
$$
which gives us that
$$
2\|\Id-T\|=\left\|(\Id - T)^2\right\|\leq\|\Id-T\|^2.
$$
Therefore, we get either $\|\Id-T\|=0$ or $\|\Id-T\|\geq2$.
Now, if $T_1,T_2\in \iso(X)$ we observe that
\begin{equation*}
\|T_1-T_2\|=\|T_1(\Id-T_1T_2)\|=\|\Id-T_1T_2\|\in\{0,2\}.\qedhere
\end{equation*}
\end{proof}

As an immediate consequence we obtain the following result. Let
us observe that there is no topological consideration on the
semigroup.

\begin{corollary}\label{cor-grups-of-isometries} If $X$ is an extremely
non-complex Banach space and $\Phi:\R^+_0\longrightarrow
\iso(X)$ is a one-parameter semigroup, then
$\Phi(\R^+_0)=\{\Id\}$.
\end{corollary}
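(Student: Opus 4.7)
The plan is to observe that the corollary is essentially immediate once Theorem~\ref{thm-isometries-form-boolean-group}(a) is in place, and that no continuity hypothesis on $\Phi$ is needed at all (only the algebraic semigroup property $\Phi(s+t)=\Phi(s)\Phi(t)$).

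First I would fix any $t\in\R^+_0$ with $t>0$ and split it as $t=\tfrac{t}{2}+\tfrac{t}{2}$. Using the semigroup identity, this gives
\[
\Phi(t)=\Phi\!\left(\tfrac{t}{2}\right)\Phi\!\left(\tfrac{t}{2}\right)=\Phi\!\left(\tfrac{t}{2}\right)^{2}.
\]
Since $\Phi(t/2)\in \iso(X)$ and $X$ is extremely non-complex, Theorem~\ref{thm-isometries-form-boolean-group}(a) yields $\Phi(t/2)^{2}=\Id$, so $\Phi(t)=\Id$. The case $t=0$ is covered by the standing convention $\Phi(0)=\Id$ (or, alternatively, $\Phi(0)=\Phi(0)^{2}$ forces $\Phi(0)=\Id$ since $\Phi(0)$ is an isometry and hence invertible).

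There is really no obstacle here: the whole point of stating the corollary is to emphasize that the rigidity established in part (a) of the theorem — every surjective isometry squares to the identity — is strong enough to collapse any one-parameter semigroup of surjective isometries to the trivial one, regardless of any topological or regularity assumption on the parameterization. I would write the argument as a single short paragraph immediately after the statement.
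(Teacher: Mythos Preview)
Your proof is correct and is essentially identical to the paper's own argument: the paper simply writes $\Phi(t)=\Phi(t/2+t/2)=\Phi(t/2)^2=\Id$ for every $t\in\R^+_0$, invoking Theorem~\ref{thm-isometries-form-boolean-group}(a). The only difference is that you treat the case $t=0$ separately, which is unnecessary but harmless.
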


\begin{proof}
Just observe that $\Phi(t)=\Phi(t/2+t/2)=\Phi(t/2)^2=\Id$ for
every $t\in \R^+_0$.
\end{proof}

Let $X$ be a Banach space. A projection $P\in L(X)$ is said to
be \emph{unconditional} if $2P-\Id\in \iso(X)$ (equivalently,
$\|2P-\Id\|=1$). We write $\mathrm{Unc}(X)$ for the set of
unconditional projections on $X$. It is straightforward to show
that $P\in \mathrm{Unc}(X)$ if and only if $P=\frac12(\Id+T)$
for some $T\in \iso(X)$ with $T^2=\Id$. It is then immediate
that $\mathrm{Unc}(X)$ identifies with $\{T\in \iso(X)\,:\,
T^2=\Id\}$ and both sets are Boolean groups: the group
operation in $\{T\in \iso(X)\,:\, T^2=\Id\}$ is just the
composition and so the group operation in $\mathrm{Unc}(X)$ is
$$
(P_1,P_2)\longmapsto P_1+P_2-P_1P_2.
$$
It also follows that all unconditional projections commute.

If $X$ is extremely non-complex, the set $\{T\in \iso(X)\,:\,
T^2=\Id\}$ is the whole $\iso(X)$
(Theorem~\ref{thm-isometries-form-boolean-group}). We summarize
all of this in the next result, where we will also discuss when
these Boolean groups are actually Boolean algebras. The proof
is completely straightforward. We refer the reader to the book
\cite[\S1.8]{Kop}  for background on Boolean algebras of projections.

\begin{prop}
Let $X$ be an extremely non-complex Banach space.
\begin{enumerate}
\item[(a)] $\iso(X)$ is a Boolean group for the composition
    operation.
\item[(b)] $\mathrm{Unc}(X)$ is (equivalently, $\iso(X)$ is
    isomorphic to) a Boolean algebra if, and only if,
    $P_1P_2\in \mathrm{Unc}(X)$ for every $P_1,P_2\in
    \mathrm{Unc}(X)$ if, and only if, $\|\Id + T_1 + T_2 -
    T_1T_2\|=2$ for every $T_1,T_2\in \iso(X)$.
\end{enumerate}
\end{prop}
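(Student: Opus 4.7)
The plan is to derive part (a) as a direct consequence of Theorem~\ref{thm-isometries-form-boolean-group}, and then to prove the two equivalences in part (b) by a chain of elementary Boolean-algebra observations followed by one algebraic identity.

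For (a), I would simply note that parts (a) and (b) of Theorem~\ref{thm-isometries-form-boolean-group} give $T^{2}=\Id$ for every $T\in\iso(X)$ together with commutativity of composition. A group in which every element is an involution and whose operation is commutative is, by definition, a Boolean group.

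For the first equivalence in (b) --- that $\mathrm{Unc}(X)$ is a Boolean algebra if and only if it is closed under taking products --- I would record three easy closure properties: $\mathrm{Unc}(X)$ contains $0$ and $\Id$; it is closed under complements, because $2(\Id-P)-\Id=-(2P-\Id)$ and $\iso(X)$ is closed under negation; and any two of its elements commute, because the associated isometries commute by Theorem~\ref{thm-isometries-form-boolean-group}(b). For commuting projections $P_{1},P_{2}$ the meet is $P_{1}P_{2}$, and the join can be recovered from meets and complements via $P_{1}\vee P_{2}=\Id-(\Id-P_{1})(\Id-P_{2})$, so closure under meets together with the properties above is exactly what makes $\mathrm{Unc}(X)$ a Boolean algebra of projections.

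For the second equivalence I would use the bijection $P\longleftrightarrow T=2P-\Id$ to rewrite the condition $P_{1}P_{2}\in\mathrm{Unc}(X)$, which --- since $P_{1}P_{2}$ is automatically a projection --- reduces to $\|2P_{1}P_{2}-\Id\|=1$. A direct expansion using $P_{i}=\tfrac{1}{2}(\Id+T_{i})$ gives
\[
2P_{1}P_{2}-\Id=\tfrac{1}{2}\bigl(T_{1}+T_{2}+T_{1}T_{2}-\Id\bigr),
\]
so the condition becomes $\|T_{1}+T_{2}+T_{1}T_{2}-\Id\|=2$. To match the form in the statement, I would finally observe that the universal quantifier ``for every $T_{1},T_{2}\in\iso(X)$'' is invariant under the substitution $T_{i}\mapsto -T_{i}$, which converts this equation into $\|\Id+T_{1}+T_{2}-T_{1}T_{2}\|=2$. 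There is no substantive obstacle anywhere in the argument: the whole proposition is essentially a bookkeeping exercise, as the paper itself signals by describing the proof as completely straightforward; the only care required is to keep the sign pattern in the final norm equality consistent with the one obtained from the direct expansion of $2P_{1}P_{2}-\Id$.
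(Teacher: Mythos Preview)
Your proposal is correct and is precisely the kind of straightforward verification the paper has in mind; the paper in fact omits the proof entirely, declaring it ``completely straightforward'' after recording the bijection $P\leftrightarrow T=2P-\Id$ in the preceding paragraph. Your computation $2P_{1}P_{2}-\Id=\tfrac{1}{2}(T_{1}+T_{2}+T_{1}T_{2}-\Id)$ and the sign-change substitution to match the stated norm equality are exactly the details one has to check.
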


We will show later that for many examples of extremely
non-complex Banach spaces the set of unconditional projections
is a Boolean algebra, but we do not know if this always
happens.

\section{Surjective isometries on extremely non-complex
$C_E(K\|L)$ spaces}\label{sec:isomtries-CEKL}

Our aim in this section is to describe the group of isometries
of the spaces $C_E(K\|L)$ when they are extremely non-complex.
We will deduce all the results from the following theorem.

\begin{theorem}\label{thm-form-of-isometries-in-CE(K)}
Suppose that the space $C_E(K\|L)$ is extremely non-complex.
Then, for every $T\in\iso\bigl(C_E(K\|L)\bigr)$ there is a
continuous function $\theta:K\setminus L\longrightarrow
\{-1,1\}$ such that
$$
[T(f)](x)=\theta(x) f(x)
$$
for all $x\in K\setminus L$ and $f\in C_E(K\|L)$.
\end{theorem}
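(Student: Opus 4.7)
My plan is to show that for every $x\in K\setminus L$, $T^*(\delta_x)=\theta(x)\delta_x$ for some $\theta(x)\in\{-1,1\}$; this immediately yields the formula $[T(f)](x)=\theta(x)f(x)$, while continuity of $\theta$ follows from $\theta=T(f_0)/f_0$ on any open set where a fixed $f_0\in C_E(K\|L)$ does not vanish.

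By Theorem~\ref{thm-isometries-form-boolean-group}, $T^2=\Id$, so $(T^*)^2=\Id$; being a weak$^*$-to-weak$^*$ surjective isometry, $T^*$ permutes $\ext[C_E(K\|L)^*]=\mathcal{A}\cup\ext[E^*]$ by Lemma~\ref{lemma:1-norming-CEKL}(a). Fix $x_0\in K\setminus L$ and set $\psi_0=T^*(\delta_{x_0})$. This extreme point is either (A) of the form $\theta_0\delta_{y_0}$ with $y_0\in K\setminus L$, $\theta_0\in\{-1,1\}$, or (B) lies in $\ext[E^*]$. The theorem reduces to ruling out the bad subcase of (A) in which $y_0\neq x_0$ and case (B). Assume for contradiction one of these holds. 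On $A_1:=\{x\in K\setminus L:T^*(\delta_x)\in\mathcal{A}\}$ write $T^*(\delta_x)=\theta(x)\delta_{\sigma(x)}$; since $T^*$ is weak$^*$-continuous and $x\mapsto\delta_x$ is a weak$^*$-homeomorphism on $K\setminus L$ (as noted inside the proof of Lemma~\ref{lemma-suf-condition}), the map $\sigma:A_1\to A_1$ is continuous, and $(T^*)^2=\Id$ forces $\sigma^2=\Id$ and $\theta\circ\sigma=\theta$.

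Choose an open set $V\ni x_0$ with $\overline V\cap L=\emptyset$, with $\sigma(V\cap A_1)\subseteq K\setminus(\overline V\cup L)$ (possible by continuity of $\sigma$ in case (A), and trivial if $V\cap A_1=\emptyset$), and small enough that $(K\setminus L)\setminus(V\cup\sigma(V\cap A_1))\neq\emptyset$ (possible because a nontrivial extremely non-complex space is infinite-dimensional, so $K\setminus L$ is infinite). Take a continuous $h:K\to[0,1]$ with $h(x_0)=1$ and $\supp h\subseteq V$; then $h|_L\equiv 0$, making $M_h$ a bounded operator on $C_E(K\|L)$. Form the commutator
\[
S:=TM_h-M_hT\in L\bigl(C_E(K\|L)\bigr),
\]
and expand $S^2=TM_hTM_h-TM_h^2T-M_h^2+M_hTM_hT$ using $T^2=\Id$. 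Evaluate $S^2f(x)$ pointwise on $K\setminus L$ by applying $\psi_x:=T^*(\delta_x)$ to each term: in case (A) at $x$ one has $\psi_x(g)=\theta(x)g(\sigma(x))$, while in case (B) at $x$, $\psi_x(g)$ depends only on $g|_L$, so every term carrying the factor $h$ vanishes because $h|_L=0$. A direct computation then yields $S^2f=-g^*\cdot f$ on $K\setminus L$, where $g^*:K\setminus L\to[0,1]$ satisfies $g^*(x_0)=h(x_0)^2=1$ and vanishes off $V\cup\sigma(V\cap A_1)$. Hence $\|S^2\|=\|g^*\|_\infty=1$, while $(\Id+S^2)f=(1-g^*)f$ with $0\leq 1-g^*\leq 1$ gives $\|\Id+S^2\|\leq 1$; testing on a norm-one bump at a point $y\in(K\setminus L)\setminus(V\cup\sigma(V\cap A_1))$ (where $g^*(y)=0$) shows $\|\Id+S^2\|\geq 1$, so $\|\Id+S^2\|=1\neq 2=1+\|S^2\|$, contradicting the extremely non-complex identity for the operator $S$.

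The main obstacle is the uniform pointwise analysis of $S^2$ across the two bad cases. It relies on (i) $\sigma:A_1\to A_1$ being a continuous involution (so that one can shrink $V$ to make $\sigma(V\cap A_1)\cap V=\emptyset$), and (ii) functionals in $\ext[E^*]$ truly annihilating $C_0(K\|L)$ (so that the $h$-bearing terms of $S^2$ disappear at points of $A_2:=(K\setminus L)\setminus A_1$). Both facts come from the $L$-summand decomposition of Lemma~\ref{lemma-decomposition-dual} together with $(T^*)^2=\Id$, but must be invoked carefully so that the support of $g^*$ stays inside $V\cup\sigma(V\cap A_1)$.
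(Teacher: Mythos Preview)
Your approach via the commutator $S=[T,M_h]$ is genuinely different from the paper's and essentially correct. The paper first proves that $A_1$ (there called $D_0$) is \emph{dense} in $K$ by a weak$^*$-topological argument, then uses the composition $S=M_gT$ to show $\sigma=\Id$ on this dense set, and only afterwards deduces $A_1=K\setminus L$ by a limiting argument. Your commutator has the pleasant feature that the pointwise identity $(S^2f)(x)=-g^*(x)f(x)$, with $g^*(x)=(h(x)-h(\sigma(x)))^2$ on $A_1$ and $g^*(x)=h(x)^2$ on $A_2$, is valid at \emph{every} $x\in K\setminus L$; thus cases~(A) and~(B) are handled simultaneously and the preliminary density step is not needed.

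There is, however, a gap in your choice of $V$. Your parenthetical justifies the condition $\sigma(V\cap A_1)\subseteq K\setminus(\overline V\cup L)$ only in case~(A) and in the subcase $V\cap A_1=\emptyset$; but in case~(B) the set $A_1$ is in fact dense (this is exactly the paper's Claim~1), so $V\cap A_1\neq\emptyset$ for every $V$, and you give no argument. The condition \emph{can} still be met in case~(B) via compactness (otherwise a net $x_\lambda\to x_0$ in $A_1$ with $\sigma(x_\lambda)$ in a fixed compact $\overline{V_0}\subseteq K\setminus L$ would give a subnet with $\sigma(x_\lambda)\to z\in K\setminus L$, forcing $T^*(\delta_{x_0})=\pm\delta_z\in\mathcal A$, a contradiction). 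But more to the point, you do not need this condition at all: since $h$ takes values in $[0,1]$ one always has $g^*(x)\in[0,1]$, whence $\|\Id+S^2\|\leq 1$ automatically. All that is required is $g^*(x_0)=1$, which in case~(B) is immediate and in case~(A) needs only $h(y_0)=0$, i.e.\ $y_0\notin\overline V$. The auxiliary test point $y$ is likewise superfluous: once $S^2\neq 0$ (test on $h$ at $x_0$), the inequality $\|\Id+S^2\|\leq 1<1+\|S^2\|$ already contradicts extreme non-complexity.
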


\begin{proof} We divide the proof into several claims.

\noindent \emph{Claim~$1$}. The set $D_0=\{x\in K\setminus
L\,:\ \exists\, y\in K\setminus L,\, \theta_0\in\{-1,1\}\
\text{with} \ T^*(\delta_x)=\theta_0\delta_y\}$ is dense in
$K$.

\noindent\emph{Proof of claim~$1$}. Let $W$ be a nonempty open
subset of $K$. Since $K\setminus L$ is open and dense in $K$,
there is $V$ nonempty and open satisfying $V\subseteq W\cap
(K\setminus L)$. Now, $\{\delta_x\, : \, x\in V\}$ is a subset
of $\ext[C_E(K\|L)]$ by Lemma~\ref{lemma:1-norming-CEKL}, and
it is easy to check that it is weak$^*$ open in
$\ext[C_E(K\|L)^*]$ (indeed, for $x_0\in V$ take a non-negative
$f\in C_0(K\|L)$ such that $f(x_0)=1$ and $f(K\setminus V)=0$,
and observe that $\delta_{x_0}\in \{\delta_x\,:\,
\delta_x(f)>1/2\}\subset \{\delta_x\, : \, x\in V\}$). Now,
being $T^*$ a weak$^*$ continuous surjective isometry, the
mapping
$$
T^*:\left(\ext[C_E(K\|L)^*],
w^*\right)\longrightarrow\left(\ext[C_E(K\|L)^*], w^*\right)
$$
is a homeomorphism and so, the set $\{T^*(\delta_x)\, : \, x\in
V\}$ is weak$^*$ open in $\ext[C_E(K\|L)^*]$. Since, by
Lemma~\ref{lemma:1-norming-CEKL}, the set
$\bigl\{\theta\,\delta_y\, : \, y\in K\setminus
L,\,\theta\in\{-1,1\}\bigr\}$ is weak$^*$ dense in
$\ext[C_E(K\|L)^*]$, there are $x\in V$, $y\in K\setminus L$,
and $\theta_0\in\{-1,1\}$ such that $T^*(\delta_x)=\theta_0
\delta_y$, which implies $x\in V\cap D_0\subseteq W\cap D_0$,
finishing the proof of claim~$1$.

Now, we can consider functions $\phi:D_0\longrightarrow D_0$
and $\theta: D_0\longrightarrow \{-1,1\}$ such that
\begin{equation}\label{eq:thm-isometries-eps-phi}
T^*(\delta_x)=\theta(x)\,\delta_{\phi(x)}
\end{equation}
for all $x\in D_0$. Since $T^2=\Id$ by
Theorem~\ref{thm-isometries-form-boolean-group}, if $x\in D_0$
and $T^*(\delta_x)=\pm\delta_y$, then
$T^*(\delta_y)=\pm\delta_x$ and so $y\in D_0$. Therefore,
$\phi$ is a well defined function from $D_0$ into itself.
Moreover, it also follows that
\begin{equation}\label{eq:thm-isometries-eps-phi2}
\phi^2(x)=x \quad \text{and}\quad
\theta(x)\,\theta(\phi(x))=1 \qquad  (x\in D_0).
\end{equation}
Indeed, given $x\in D_0$, we use the fact that $(T^*)^2=\Id$ to
get
$$
\delta_x=T^*(T^*(\delta_x))=T^*(\theta(x)\delta_{\phi(x)})=
\theta(x)\theta(\phi(x))\,\delta_{\phi^2(x)}.
$$

\noindent \emph{Claim~$2$}. $\phi$ is a homeomorphism of $D_0$.

\noindent\emph{Proof of claim~$2$}. As $\phi^2$ is the identity
on $D_0$, it is enough to prove that $\phi$ is continuous. To
do so, fixed $x_0\in D_0$ and an open subset $W$ of $K\setminus
L$ with $\phi(x_0)\in W$, we have to show that $\phi^{-1}(W\cap
D_0)$ is a neighborhood of $x_0$ in $D_0$. Indeed, we consider
a continuous function $f_0\in C_0(K\|L) \subseteq C_E(K\|L)$
such that
$$
f_0(\phi(x_0))=1=\|f_0\| \qquad \text{and} \qquad f_0\equiv 0 \text{ in }K\setminus W.
$$
Since the mapping
$$
x\longmapsto [T^*(\delta_x)](f_0)=\theta(x)f_0(\phi(x)) \qquad \bigl(x\in D_0\bigr)
$$
is continuous at $x_0$, there is an open neighborhood $U_0$ of
$x_0$ such that
$$
\bigl||f_0(\phi(x))| - 1 \bigr|\leq \bigl|\theta(x)f_0(\phi(x)) - \theta(x_0)f_0(\phi(x_0)) \bigr|<\frac12 \quad
\bigl(x\in U_0\cap D_0\bigr).
$$
Since $f_0\equiv 0$ outside $W$, we get that
$
U_0\cap D_0\subseteq \phi^{-1}(W\cap D_0).
$

\noindent\emph{Claim~$3$}. $\phi(x)=x$ for all $x\in D_0$.

\noindent\emph{Proof of claim~$3$}. Suppose otherwise that
there are $x_0, y_0\in D_0$ such that $\phi(x_0)=y_0\neq x_0$.
Let $V_i\subseteq \overline{V}_i\subseteq K\setminus L$ with
$i=1,2$ be open subsets of $K$ satisfying
$$
x_0\in V_1, \quad y_0\in V_2, \quad
\overline{V}_1\cap\overline{V}_2=\emptyset, \quad \text{and}\quad
V_1\cap D_0\subseteq \phi^{-1}(V_2\cap D_0).
$$
The last condition obviously implies $\phi(V_1\cap
D_0)\subseteq V_2$ and, since $\phi$ is a homeomorphism of
$D_0$, it follows that $\phi(V_1\cap D_0)$ is open in $D_0$.
Therefore, we can find $V_0\subseteq V_2$ an open subset of $K$
such that $V_0\cap D_0=\phi(V_1\cap D_0)$. Then, we may find
$g\in C_0(K\|L)\subseteq C_E(K\|L)$ satisfying $g(x_0)=1$,
$g(y_0)=-1$, $g(x)\in [-1,0]$ for $x\in V_0$, $g(x)\in [0,1]$
for $x\in V_1$, and $g(x)=0$ for $x\not\in V_1\cup V_0$. In
particular, for $x\in D_0$, we have that
$$
g(x)\,g(\phi(x))\in [-1,0].
$$
Next, we define the operator $T_g:C_E(K\|L)\longrightarrow
C_E(K\|L)$ by $T_g(f)=gf$, which is well defined (since
$g|_L\equiv 0$) and satisfies $T_g^*(\delta_x)=g(x)\delta_x$
for each $x\in K\setminus L$. Finally, we consider the
composition $S=T_g T$ and, for $x\in D_0$, we use
\eqref{eq:thm-isometries-eps-phi} and
\eqref{eq:thm-isometries-eps-phi2} to write
$$
(S^*)^2(\delta_x)=T^*(T_g^*(T^*(T_g^*(\delta_x))))=
\theta(x)\theta(\phi(x))g(x)g(\phi(x))\delta_{\phi^2(x)}=
g(x)g(\phi(x))\delta_x.
$$
This, together with our choice of $g$, tells us that
$$
\bigl\|[\Id+(S^*)^2](\delta_x)\bigr\|\leq 1 \qquad \bigl(x\in D_0\bigr).
$$
As $D_0$ is dense in $K$ by claim~$1$ and $\Id+(S^*)^2$ is
weak$^*$-continuous, we deduce that $\|\Id+(S^*)^2\|\leq 1$.
Now, the fact that $C_E(K\|L)$ is extremely non-complex implies
that $S^2=0$ which is a contradiction since
$(S^*)^2(\delta_{x_0})=-\delta_{x_0}\not=0$.

\noindent\emph{Claim~$4$}. $D_0=K\setminus L$.

\noindent\emph{Proof of claim~$4$}. Let us fix $x_0\in
K\setminus L$. Since $D_0$ is dense in $K\setminus L$, we may
find a net $(x_\lambda)_{\lambda\in\Lambda}$ in $D_0$ such that
$(x_\lambda)_{\lambda\in\Lambda}\longrightarrow x_0$, so
$\bigl(T^*(\delta_{x_\lambda})\bigr)_{\lambda\in\Lambda}\longrightarrow
T^*(\delta_{x_0})$. But $T^*(\delta_{x_\lambda})
=\theta(x_\lambda)\,\delta_{x_\lambda}$, so the only possible
accumulation points of the net
$\bigl(T^*(\delta_{x_\lambda})\bigr)_{\lambda\in\Lambda}$ are
$+\delta_{x_0}$ and $-\delta_{x_0}$. Therefore, $x_0\in D_0$ as
claimed.


\noindent\emph{Claim~$5$}. $\theta$ is continuous on
$K\setminus L$.

\noindent\emph{Proof of claim~$5$}. We fix $x_0\in K\setminus
L$ and an open subset $W$ of $K$ such that $x_0\in W\subseteq
\overline{W}\subseteq K\setminus L$ and we take a function
$f\in C_0(K\|L)\subseteq C_E(K\|L)$ satisfying
$f|_{\overline{W}}\equiv 1$. Since the mapping
$$
x\longmapsto \psi(x)=[T^*(\delta_x)](f)=\theta(x)\,f(x) \qquad \bigl(x\in K\setminus
L\bigr)
$$
is continuous and $\psi|_{W}\equiv\theta|_{W}$, we get the
continuity of $\theta$ at $x_0$.
\end{proof}

We are now able to completely describe the set of surjective
isometries in some special cases. The first one covers the case
when $K$ and $K\setminus L$ are connected.

\begin{corollary}\label{cor:K-L-connected-isometries}
Let $K$ be a connected compact space such that $K\setminus L$ is
also connected. Suppose that $C_E(K\|L)$ is extremely non-complex.
Then, $\iso\bigl(C_E(K\|L)\bigr)=\{\Id,-\Id\}$.
\end{corollary}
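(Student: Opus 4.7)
The plan is to invoke Theorem~\ref{thm-form-of-isometries-in-CE(K)} directly and then exploit the connectedness hypothesis to force the multiplier function $\theta$ to be constant.

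Given $T\in \iso\bigl(C_E(K\|L)\bigr)$, Theorem~\ref{thm-form-of-isometries-in-CE(K)} supplies a continuous function $\theta:K\setminus L\longrightarrow \{-1,1\}$ such that $[T(f)](x)=\theta(x)\,f(x)$ for every $x\in K\setminus L$ and every $f\in C_E(K\|L)$. Since $\{-1,1\}$ carries the discrete topology and $K\setminus L$ is connected by hypothesis, $\theta$ must be constant: either $\theta\equiv 1$ or $\theta\equiv -1$ on $K\setminus L$.

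In the first case, for every $f\in C_E(K\|L)$ we have $[T(f)](x)=f(x)$ for all $x\in K\setminus L$; since $T(f)$ and $f$ are continuous on $K$ and $K\setminus L$ is dense in $K$ (because $L$ is nowhere dense), we conclude $T(f)=f$ on $K$, that is, $T=\Id$. In the second case the same density argument yields $T(f)=-f$ for every $f$, so $T=-\Id$. Conversely $\Id$ and $-\Id$ are obviously surjective isometries of $C_E(K\|L)$, completing the proof.

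There is essentially no obstacle here: the hard work has been done inside Theorem~\ref{thm-form-of-isometries-in-CE(K)}, which reduces every isometry to multiplication by a continuous $\pm 1$-valued function on $K\setminus L$; the connectedness assumption on $K\setminus L$ merely converts this representation into the trivial one. The connectedness of $K$ itself is not used in this argument and is stated only for context (to ensure the setting is non-degenerate and to match later corollaries).
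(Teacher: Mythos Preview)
Your proof is correct and essentially identical to the paper's own argument: invoke Theorem~\ref{thm-form-of-isometries-in-CE(K)}, use connectedness of $K\setminus L$ to force $\theta$ constant, and use density of $K\setminus L$ to recover $T=\pm\Id$. Your observation that the connectedness of $K$ itself is not actually used is also accurate---the paper's proof likewise relies only on the connectedness of $K\setminus L$ (and the standing assumption that $L$ is nowhere dense).
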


\begin{proof}
Given $T\in \iso\bigl(C_E(K\|L)\bigr)$,
Theorem~\ref{thm-form-of-isometries-in-CE(K)} gives a continuous
function $\theta:K\setminus L\longrightarrow \{-1,1\}$ such that
$[T(f)](x)=\theta(x)\,f(x)$ for every $x\in K\setminus L$ and
every $f\in C_E(K\|L)$. If $K\setminus L$ is connected, there are
only two possible functions $\theta$. Being $L$ nowhere dense, the
values of $[T(f)](x)$ for $x\in K\setminus L$ determine completely
the function $T(f)$ for every $f\in C_E(K\|L)$. This gives only
two possible surjective isometries, $\Id$ and $-\Id$.
\end{proof}

\begin{corollary}\label{coro-M=emptyset}
Suppose $E$ is a subspace of $C(L)$ such that $C_E(K\|L)$ is
extremely non-complex and for every $x\in L$, there is $f\in E$
such that $f(x)\neq 0$. If $T\in \iso\bigl(C_E(K\|L)\bigr)$,
then there is a continuous function $\theta:
K\longrightarrow\{-1,1\}$ such that $T(f)=\theta\, f$ for all
$f\in C_E(K\|L)$.
\end{corollary}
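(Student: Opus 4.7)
The plan is to leverage Theorem~\ref{thm-form-of-isometries-in-CE(K)}, which already produces a continuous function $\theta:K\setminus L\longrightarrow\{-1,1\}$ realizing $T$ as multiplication by $\theta$ on $K\setminus L$. What remains is to continuously extend $\theta$ across $L$ and check that the multiplication formula persists there. The hypothesis on $E$ is precisely what will enable this extension: for each $x_0\in L$ it furnishes a function in $C_E(K\|L)$ not vanishing at $x_0$, so one can locally represent $\theta$ as a ratio of two continuous functions.

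I would proceed as follows. Fix $x_0\in L$ and choose $f_{x_0}\in E$ with $f_{x_0}(x_0)\neq 0$; by Tietze's theorem extend $f_{x_0}$ to a function $\tilde f_{x_0}\in C(K)$ with $\tilde f_{x_0}|_L=f_{x_0}$, so that $\tilde f_{x_0}\in C_E(K\|L)$. On the open neighbourhood $U_{x_0}=\{y\in K:\tilde f_{x_0}(y)\neq 0\}$ of $x_0$ I would consider the continuous function $g_{x_0}:=T(\tilde f_{x_0})/\tilde f_{x_0}$. Theorem~\ref{thm-form-of-isometries-in-CE(K)} ensures that $g_{x_0}(y)=\theta(y)\in\{-1,1\}$ on $U_{x_0}\cap(K\setminus L)$, a set which is dense in $U_{x_0}$ because $L$ is nowhere dense; hence, by continuity of $g_{x_0}$ and closedness of $\{-1,1\}$, one has $g_{x_0}(U_{x_0})\subseteq\{-1,1\}$.

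The next task is to glue these local pieces. If $x_0,x_1\in L$ satisfy $U_{x_0}\cap U_{x_1}\neq\emptyset$, then $g_{x_0}$ and $g_{x_1}$ both coincide with $\theta$ on the dense open subset $U_{x_0}\cap U_{x_1}\cap(K\setminus L)$, so by continuity they agree on the whole intersection. I can therefore unambiguously define $\theta(x):=g_x(x)$ for $x\in L$; the resulting function $\theta:K\longrightarrow\{-1,1\}$ is continuous at each point of $K\setminus L$ (because $K\setminus L$ is open and $\theta|_{K\setminus L}$ was already continuous) and at each $x\in L$ (because $\theta$ coincides with the continuous $g_x$ on the open neighbourhood $U_x$). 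Finally, for any $f\in C_E(K\|L)$ and any $x\in L$, picking a net $(y_\alpha)$ in $K\setminus L$ converging to $x$ (which exists since $L$ is nowhere dense) and using the identity $[T(f)](y_\alpha)=\theta(y_\alpha)f(y_\alpha)$ from Theorem~\ref{thm-form-of-isometries-in-CE(K)} together with joint continuity of $T(f)$, $\theta$, and $f$ yields $[T(f)](x)=\theta(x)f(x)$.

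The main subtlety, and the place where the hypothesis on $E$ is indispensable, is the construction of the functions $g_{x_0}$: without some $\tilde f_{x_0}\in C_E(K\|L)$ nonzero at $x_0$ one cannot form a continuous representative of $\theta$ on a neighbourhood of $x_0$, and nothing would then prevent $\theta$ from oscillating between $\pm 1$ arbitrarily close to $x_0$. Everything else is a routine density-plus-continuity argument, so this single ratio construction is the heart of the proof.
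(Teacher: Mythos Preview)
Your proposal is correct and follows essentially the same route as the paper's proof: invoke Theorem~\ref{thm-form-of-isometries-in-CE(K)} to get $\theta$ on $K\setminus L$, then near each $x_0\in L$ write $\theta$ locally as the ratio $T(f)/f$ for some $f\in C_E(K\|L)$ with $f(x_0)\neq 0$, and conclude by density of $K\setminus L$. The paper's version is simply terser---it does not spell out the Tietze extension or the gluing of the local pieces---but the underlying idea is identical.
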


\begin{proof}
By Theorem~\ref{thm-form-of-isometries-in-CE(K)}, we may find
$\theta': K\setminus L\longrightarrow \{-1,1\}$ continuous such
that
$$
[T(f)](x)=\theta'(x)\,f(x) \qquad \bigl(x\in K\setminus L,\ f\in C_E(K\|L)\bigr).
$$
First, we note that $\theta'$ can be extended to a continuous
function $\theta$ on $K$ (indeed, if $x\in L$, there is an open
neighborhood $U$ of $x$ on $K$ and an $f\in C_E(K\|L)$ such
that $f(y)\not= 0$ for every $y\in U$ and
$\frac{T(f)|_U}{f|_U}$ is a continuous function on $U$ which
extends $\theta'|_{U\setminus L}$). Now, for each $f\in
C_E(K\|L)$ we have
$$
[T(f)](x)=\theta(x)f(x) \qquad \bigl(x\in
K\setminus L\bigr),
$$
so $T(f)=\theta f$ since they are two continuous functions
which agree on a dense set.
\end{proof}

By just taking $E=C(L)$ in the above result, we get a
description of all surjective isometries on an extremely
non-complex $C(K)$ space. One direction is the above corollary,
the converse result is just a consequence of the classical
Banach-Stone theorem (see \cite[Theorem~2.1.1]{Fle-Jam1}, for
instance).

\begin{corollary}
Let $K$ be a perfect Hausdorff space such that $C(K)$ is extremely
non-complex. If $T\in \iso\bigl(C(K)\bigr)$, then there is a
continuous function $\theta:K\longrightarrow \{-1,1\}$ such that
$T(f)=\theta\,f$ for every $f\in C(K)$. Conversely, for every
continuous function $\theta':K\longrightarrow \{-1,1\}$, the
operator given by $T(f)=\theta'\,f$ for every $f\in C(K)$ is a
surjective isometry. In other words, $\iso\bigl(C(K)\bigr)$ is
isomorphic to the Boolean algebra of clopen subsets of $K$.
\end{corollary}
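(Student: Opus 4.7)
The plan is to deduce both directions quickly from earlier material and then identify the resulting group. For the forward direction, I would apply Corollary~\ref{coro-M=emptyset} with $L=\emptyset$ (a vacuously nowhere dense closed subset of $K$) and $E=\{0\}$, so that $C_E(K\|L)=C(K)$. The hypothesis that $C(K)$ is extremely non-complex is exactly what is needed, and the additional condition on $E$ in that corollary (``for every $x\in L$ there is $f\in E$ with $f(x)\neq 0$'') holds vacuously. The corollary then yields a continuous $\theta:K\longrightarrow\{-1,1\}$ with $T(f)=\theta\,f$ for every $f\in C(K)$.

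For the converse, given continuous $\theta':K\longrightarrow\{-1,1\}$, the operator $T(f)=\theta'\,f$ is linear, takes $C(K)$ into itself, preserves the sup-norm since $|\theta'(x)|=1$ on $K$, and satisfies $T\circ T=\Id$ (because $(\theta')^2\equiv 1$); hence $T\in\iso\bigl(C(K)\bigr)$. As the authors remark, one can also invoke the Banach--Stone theorem directly, realizing $T$ as a weighted composition operator whose composition part is the identity homeomorphism of $K$.

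To obtain the Boolean algebra isomorphism, I would define $\Psi:\iso\bigl(C(K)\bigr)\longrightarrow \Clop(K)$ by $\Psi(T_\theta)=\theta^{-1}(\{-1\})$, where $T_\theta$ denotes multiplication by a continuous $\theta:K\longrightarrow\{-1,1\}$. This set is clopen because $\{-1\}$ is clopen in $\{-1,1\}$ and $\theta$ is continuous; the map is bijective, with inverse sending a clopen $U\subseteq K$ to multiplication by $\mathbf{1}_{K\setminus U}-\mathbf{1}_U$. Since $T_{\theta_1}T_{\theta_2}=T_{\theta_1\theta_2}$, a pointwise check gives $(\theta_1\theta_2)^{-1}(\{-1\})=\theta_1^{-1}(\{-1\})\,\triangle\,\theta_2^{-1}(\{-1\})$, so $\Psi$ intertwines composition with symmetric difference, which is the Boolean group operation on $\Clop(K)$. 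There is no serious obstacle here: everything reduces to a specialization of Corollary~\ref{coro-M=emptyset} and routine verifications; the only point requiring care is that $\Clop(K)$ must be viewed with its symmetric-difference (Boolean group) structure in order for $\Psi$ to be an isomorphism.
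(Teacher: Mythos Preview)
Your proposal is correct and follows essentially the same route as the paper: the forward direction is obtained from Corollary~\ref{coro-M=emptyset} (the paper phrases it as ``taking $E=C(L)$'', and your choice $L=\emptyset$, $E=\{0\}=C(\emptyset)$ is precisely the simplest instance of this), while the converse is attributed to Banach--Stone. Your explicit description of the bijection with $\Clop(K)$ and the verification that composition corresponds to symmetric difference add detail the paper omits, but the strategy is the same.
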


It follows from the above result and the Banach-Stone theorem
on the representation of surjective isometries on $C(K)$ (see
\cite[Theorem~1.2.2]{Fle-Jam1} for instance) that the only
homeomorphism of $K$ is the identity.

\begin{corollary}
Let $K$ be a perfect Hausdorff space such that $C(K)$ is
extremely non-complex. Then, the unique homeomorphism from $K$
onto $K$ is the identity.
\end{corollary}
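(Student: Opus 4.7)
The plan is to combine the previous corollary (every surjective isometry of $C(K)$ is a multiplication by a continuous sign-valued function) with the very Banach--Stone representation already cited. Given any homeomorphism $\phi:K\longrightarrow K$, the composition operator
\[
T_\phi:C(K)\longrightarrow C(K),\qquad T_\phi(f)=f\circ\phi,
\]
is a surjective isometry, so it falls under the preceding corollary. This is the only nontrivial ingredient needed beyond what has already been established.

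First I would apply the previous corollary to $T_\phi$ to obtain a continuous function $\theta:K\longrightarrow\{-1,1\}$ such that
\[
f(\phi(x))=\theta(x)\,f(x)\qquad \bigl(x\in K,\ f\in C(K)\bigr).
\]
Testing this identity against the constant function $f\equiv 1$ instantly forces $\theta\equiv 1$, so the identity reduces to $f(\phi(x))=f(x)$ for every $f\in C(K)$ and every $x\in K$.

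Finally, I would argue that this pointwise equality forces $\phi=\mathrm{Id}$. Suppose towards a contradiction that $\phi(x_0)\neq x_0$ for some $x_0\in K$. Since $K$ is compact Hausdorff, it is normal, and Urysohn's lemma produces $f\in C(K)$ with $f(x_0)=0$ and $f(\phi(x_0))=1$, contradicting $f(\phi(x_0))=f(x_0)$. Hence $\phi(x)=x$ for all $x\in K$.

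No serious obstacle is expected: the entire content lies in quoting the previous corollary for the particular isometry $T_\phi$ and then using the Hausdorff separation property of $K$. The only point that requires a brief mention is why $T_\phi$ is a surjective isometry to begin with, which is immediate because $\phi$ being a homeomorphism makes $f\mapsto f\circ\phi$ a linear bijection preserving the sup-norm.
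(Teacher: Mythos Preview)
Your argument is correct and follows essentially the same route as the paper. The paper's proof is a one-line remark invoking the previous corollary together with the Banach--Stone representation of surjective isometries on $C(K)$; your version is simply the explicit unpacking of that reference, replacing the uniqueness clause of Banach--Stone by the direct Urysohn separation argument.
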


We finish the section with the study of the opposite extreme case,
i.e.\ when $E=\{0\}$. Then, the hypothesis of
Corollary~\ref{coro-M=emptyset} are not satisfied, but we obtain a
description of the surjective isometries of the spaces
$C_0(K\|L)\equiv C_0(K\setminus L)$ directly from
Theorem~\ref{thm-form-of-isometries-in-CE(K)}. Again, the converse
result comes from the Banach-Stone theorem (see
\cite[Corollary~2.3.12]{Fle-Jam1} for instance).

\begin{corollary}
Let $K$ be a compact Hausdorff space, $L\subset K$ closed
nowhere dense, and suppose that $C_0(K\setminus L)$ is
extremely non-complex. If $T\in \iso\bigl(C_0(K\setminus
L)\bigr)$, then there is a continuous function
$\theta:K\setminus L\longrightarrow \{-1,1\}$ such that
$T(f)=\theta\,f$ for every $f\in C_0(K\setminus L)$.
Conversely, for every continuous function $\theta':K\setminus L
\longrightarrow \{-1,1\}$, the operator
$$
\bigl[T(f)\bigr](x)=\theta'(x)\,f(x) \qquad
\bigl(x\in K\setminus L,\ f\in C_0(K\setminus L)\bigr)
$$
is a surjective isometry. In other words,
$\iso\bigl(C_0(K\setminus L)\bigr)$ is isomorphic to the
Boolean algebra of clopen subsets of $K\setminus L$.
\end{corollary}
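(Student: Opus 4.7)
The forward direction is essentially immediate from Theorem~\ref{thm-form-of-isometries-in-CE(K)} applied with $E=\{0\}$, since $C_0(K\|L)$ identifies isometrically with $C_0(K\setminus L)$. Namely, given $T\in\iso\bigl(C_0(K\setminus L)\bigr)$, the theorem produces a continuous function $\theta:K\setminus L\longrightarrow\{-1,1\}$ with $[T(f)](x)=\theta(x)f(x)$ for every $x\in K\setminus L$ and every $f\in C_0(K\setminus L)$. That is the formula we want on all of $K\setminus L$, which is the whole domain of definition of elements of $C_0(K\setminus L)$; no ``extension to $L$'' step is needed here, in contrast with Corollary~\ref{coro-M=emptyset}.

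For the converse, fix a continuous $\theta':K\setminus L\longrightarrow\{-1,1\}$ and define $T(f)=\theta'f$. I would check four points: (i) $T(f)$ lies in $C_0(K\setminus L)$, because $\theta'$ is continuous and bounded and $|T(f)(x)|=|f(x)|$, so continuity and vanishing at infinity are preserved; (ii) $T$ is linear, trivially; (iii) $\|T(f)\|=\sup_{x\in K\setminus L}|\theta'(x)f(x)|=\|f\|$, so $T$ is an isometry; (iv) since $\theta'^2\equiv 1$, we have $T\circ T=\Id$, which gives surjectivity. Alternatively, one may cite the Banach--Stone type representation for $\iso\bigl(C_0(K\setminus L)\bigr)$ recorded in \cite[Corollary~2.3.12]{Fle-Jam1}.

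For the last assertion, I would set up the identification $\theta\longmapsto\theta^{-1}(\{-1\})$. Since $\{-1,1\}$ carries the discrete topology, a function $\theta:K\setminus L\longrightarrow\{-1,1\}$ is continuous if and only if $\theta^{-1}(\{-1\})$ is clopen in $K\setminus L$; this gives a bijection between such continuous functions and $\Clop(K\setminus L)$. Under the forward/converse correspondence established above, this becomes a bijection between $\iso\bigl(C_0(K\setminus L)\bigr)$ and $\Clop(K\setminus L)$. It remains to see that it is a group isomorphism: if $T_i(f)=\theta_i\,f$ then $T_1T_2(f)=\theta_1\theta_2\,f$, and the clopen set associated with $\theta_1\theta_2$ is exactly the symmetric difference of those associated with $\theta_1$ and $\theta_2$, i.e.\ the Boolean group operation on $\Clop(K\setminus L)$.

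There is no real obstacle here; the only point requiring a little care is distinguishing $C_0(K\|L)$ (functions on $K$ vanishing on $L$) from $C_0(K\setminus L)$ (continuous functions on the locally compact space $K\setminus L$ vanishing at infinity), so that Theorem~\ref{thm-form-of-isometries-in-CE(K)} may be applied. The formula produced by that theorem lives naturally on $K\setminus L$, which is exactly the setting of the statement, so no density argument ``towards $L$'' (as in the proof of Corollary~\ref{coro-M=emptyset}) is needed in this case.
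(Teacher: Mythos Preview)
Your proof is correct and follows essentially the same route as the paper: the forward implication is obtained by specializing Theorem~\ref{thm-form-of-isometries-in-CE(K)} to $E=\{0\}$, and the converse is a direct verification (the paper phrases it via an arbitrary extension of $\theta'$ to $L$ in the $C_0(K\|L)$ picture, you work directly on $K\setminus L$, but this is the same argument under the identification $C_0(K\|L)\equiv C_0(K\setminus L)$). Your added details---surjectivity via $T^2=\Id$ and the explicit identification of $\iso\bigl(C_0(K\setminus L)\bigr)$ with $\Clop(K\setminus L)$ through $\theta\mapsto\theta^{-1}(\{-1\})$ and composition $\leftrightarrow$ symmetric difference---are correct and simply spell out what the paper leaves implicit.
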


\begin{proof}
The first part is a direct consequence of
Theorem~\ref{thm-form-of-isometries-in-CE(K)} for $E=\{0\}$. For
the converse result, just observe that given any extension of
$\theta'$ to $L$, the product $\theta'\,f:K\longrightarrow \R$
does not depend on the extension, belongs to $C_0(K\|L)$ and
$\|\theta'\,f\|_\infty=\|f\|_\infty$.
\end{proof}

\section{The construction of the main example}\label{sec:main-example}
Our goal here is to construct a compact space $K$ and a nowhere
dense subset $L\subseteq K$ with very special properties which
will allow us to provide the main example on surjective
isometries and duality.

\begin{theorem}\label{th:K-L-connected-construction}
There exist a compact space $K$ and a closed nowhere dense
subset $L\subseteq K$ with the following properties:
\begin{enumerate}
\item[(a)] $K$ and $K\setminus L$ are connected.
\item[(b)] There is a continuous mapping $\phi$ from $L$
    onto the Cantor set.
\item[(c)] Every operator on $C(K)$ is a weak multiplier.
\end{enumerate}
\end{theorem}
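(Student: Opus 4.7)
The plan is to adapt Koszmider's construction of a connected compact space with few operators from \cite[\S5]{Koszmider}. That construction builds $K$ as an inverse limit $K=\varprojlim K_\alpha$ of connected compact metric spaces, indexed by $\omega_1$, with each successor step modifying $K_\alpha$ by a controlled operation designed to kill a potential non-weak-multiplier operator while preserving connectedness. I will superimpose on this construction a fixed closed nowhere dense subset $L$ surjecting onto the Cantor set $\{0,1\}^\N$, arranging that the modifications are localized away from $L$ so that $K\setminus L$ remains connected.

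First, I would fix the base of the induction. Choose $K_0$ to be a connected compact metric space containing a closed nowhere dense subset $L_0$ that maps continuously onto $\{0,1\}^\N$ and for which $K_0\setminus L_0$ is connected. A concrete choice is $K_0=[0,1]^2$ with $L_0=C\times\{0\}$, where $C\subseteq[0,1]$ is the standard Cantor set: $L_0$ is zero-dimensional inside a two-dimensional connected set, so its complement is automatically connected, $L_0$ is nowhere dense, and the canonical homeomorphism $C\to\{0,1\}^\N$ provides the required surjection.

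Next, I would run Koszmider's transfinite induction on top of $K_0$, adding one bookkeeping requirement: at stage $\alpha$ one chooses an open set $U_\alpha\subseteq K_\alpha$ disjoint from a prescribed open neighborhood of the preimage $L_\alpha$ of $L_0$, and performs \emph{all} splittings/identifications needed to kill potential non-weak-multiplier behavior inside $U_\alpha$. This guarantees that each bonding map $K_{\alpha+1}\to K_\alpha$ restricts to a homeomorphism on a neighborhood of $L_\alpha$, so that in the limit the image of $L_0$ is a closed set $L\subseteq K$ homeomorphic to $L_0$; in particular $L$ remains nowhere dense, surjects onto $\{0,1\}^\N$, and yields (b). Because the modifications inside $U_\alpha$ can be carried out preserving connectedness of both $K_{\alpha+1}$ and $K_{\alpha+1}\setminus L_{\alpha+1}$ (again the modifications are local and performed on connected open pieces of $K_\alpha\setminus L_\alpha$), the inverse limit is connected and so is its open subset $K\setminus L$, giving (a).

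The main obstacle is (c): the original argument in \cite[\S5]{Koszmider} exploits full freedom to modify $K_\alpha$ anywhere, whereas here the modifications are restricted to the open pieces $U_\alpha\subseteq K_\alpha\setminus L_\alpha$. One must check that this restricted freedom still suffices to enumerate, in the course of the induction, enough potential operators to force every operator on the limit $C(K)$ to be a weak multiplier in the sense of Definition~\ref{def-weak-multiplication-weak-multiplier}. Since $L$ is nowhere dense and the characterization of weak multipliers from \cite[Theorem~2.2]{Koszmider} is tested via the action of the dual operator on the dense set of point masses $\delta_x$ for $x\in K\setminus L$, the behavior of any $T\in L(C(K))$ is governed by its values off $L$, i.e.\ precisely in the region where the modifications occur. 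This allows the usual bookkeeping of Koszmider's construction to go through with only cosmetic changes, yielding (c) and completing the proof.
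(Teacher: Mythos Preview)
Your approach differs substantially from the paper's, and the difference is instructive. The paper does \emph{not} modify Koszmider's construction at all: it takes the compact $K$ of \cite[\S5]{Koszmider} off the shelf, so that (c) is already established by \cite[Lemma~5.2]{Koszmider} and requires no further argument. The only new work is to locate $L$ inside this fixed $K$. For that, one picks a copy $N$ of the Cantor set inside the base $K_1=[0,1]^2$ (lying in a subinterval, so $[0,1]^2\setminus N$ is connected) and sets $L=\pi_{1,2^\omega}^{-1}[N]$. Then $\pi_{1,2^\omega}|_L$ gives the map onto the Cantor set, and one proves by induction on $\alpha$ that $N_\alpha=\pi_{1,\alpha}^{-1}[N]$ is nowhere dense with connected complement in $K_\alpha$. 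Nowhere density is preserved by the bonding maps via \cite[Lemma~4.3.a]{Koszmider}; connectedness of the complement is handled by a direct generalization of \cite[Lemma~4.6]{Koszmider}, exhibiting an increasing chain of compact connected $M_\alpha^n\subseteq K_\alpha\setminus N_\alpha$ whose union is dense there. No step of the inverse-limit construction is altered.

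By contrast, you propose to \emph{rerun} the transfinite construction with an added constraint that all modifications occur away from the evolving $L_\alpha$, so that the bonding maps are homeomorphisms near $L_\alpha$. This makes (a) and (b) easy but, as you yourself flag, turns (c) into the hard part: you must argue that restricting the location of the splittings still leaves enough freedom to kill every potential non-weak-multiplier. Your justification --- that weak multipliers are detected on point masses $\delta_x$ with $x$ in the dense set $K\setminus L$ --- does not settle this. Koszmider's successor step requires choosing \emph{specific} points (dictated by the operator being killed) at which to perform the extension, and there is no a priori reason those points avoid a neighborhood of $L_\alpha$; showing they can always be relocated into your $U_\alpha$ would require reopening the combinatorics of \cite[\S5]{Koszmider}. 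The paper's route avoids this entirely: since the original bonding maps already preserve ``nowhere dense with connected complement'', there is no need to constrain them, and (c) comes for free.
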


\begin{proof}
$K$ is the compact space constructed in \cite[\S5]{Koszmider}.
The fact that all operators on $C(K)$ are weak multipliers is
given in \cite[Lemma~5.2]{Koszmider}.

We just need to find the appropriate $L$. We will assume the
familiarity of the reader with the above construction of $K$ in
\cite[\S~5]{Koszmider}. In particular, that  $K\subseteq
[0,1]^{2^\omega}$ is the inverse limit of $K_\alpha\subseteq
[0,1]^\alpha$ for $\alpha\leq 2^\omega$ where $K_1=[0,1]^2$.
For $\beta\leq\alpha\leq 2^\omega$ the projection from
$[0,1]^\alpha$ onto $[0,1]^\beta$ is denoted
$\pi_{\beta,\alpha}$.

Choose any $N\subseteq [0,1]^2$ which is a copy of a Cantor set
included in some subinterval of $[0,1]^2$. In particular, it is
compact nowhere dense perfect and such that $[0,1]^2\setminus
N$ is connected. Let $N_\alpha=\pi_{1,\alpha}[N]$. We claim
that $L=N_{2^{\omega}}$ works i.e., is nowhere dense in $K$ and
$K\setminus L$ is connected and there is a continuous mapping
of $L$ onto the Cantor set. The last part is clear as
$\pi_{1,2^\omega}$ sends $L$ onto $N$ which is a homeomorphic
copy of the Cantor set.

One proves by induction on $\alpha\leq 2^{\omega}$ that
$N_{\alpha}$ is nowhere dense in $K_\alpha$ and
$K_\alpha\setminus N_\alpha$ is connected. This is essentially
a generalization of \cite[Lemma~4.6]{Koszmider} from a finite
set to a nowhere dense set with a connected complement in
$[0,1]^2$.

\cite[Lemma~4.3.a]{Koszmider} says that being nowhere dense is
preserved when we pass by preimages from $K_\alpha$ to
$K_{\alpha+1}$ so, as the limit stage is trivial, it follows
that every $N_\alpha$ is nowhere dense in $K_\alpha$.
Therefore, $L$ is nowhere dense in $K$.

So we are left with showing that $K_\alpha\setminus N_\alpha$
are connected. As in \cite[Lemma~4.6]{Koszmider}, we prove by
induction on $\alpha$ that there are $M^n_\alpha\subseteq
K_\alpha$ such that
\begin{enumerate}
\item[1)]
    $\pi_{\alpha',\alpha}[M^n_{\alpha}]=M^n_{\alpha'}$ for
    $\alpha'\leq\alpha\leq\beta$,
\item[2)] $M^n_\alpha$'s are compact and connected,
\item[3)] $M^n_\alpha\cap N_\alpha=\emptyset$,
    $M^n_\alpha\subseteq M^{n+1}_\alpha$,
\item[4)] $\bigcup_{n\in N} M^n_\alpha$ is dense in
    $K_\alpha\setminus N_\alpha$.
\end{enumerate}
We start by choosing $M^n_1$ to satisfy 2) - 4) and such that
$[0,1]^2\setminus\bigcup_{n\in N} M^n_\alpha$ is $N=N_1\subseteq
[0,1]^2$. The rest of the argument is exactly as in the last part of
the proof of \cite[Lemma~4.6]{Koszmider}.
\end{proof}

We are now ready to present the main application of the results
of the paper.

\begin{theorem}\label{thm:everyE-CEKL-isometries}
For every separable Banach space $E$, there is a Banach space
$\widetilde{X}(E)$ such that
$\iso\bigl(\widetilde{X}(E)\bigr)=\{\Id,-\Id\}$ and
$\widetilde{X}(E)^*=E^*\oplus_1 Z$ for a suitable space $Z$. In
particular, $\iso\bigl(\widetilde{X}(E)^*\bigr)$ contains
$\iso(E^*)$ as a subgroup.
\end{theorem}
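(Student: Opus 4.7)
The idea is to assemble the ingredients from the previous sections around the compact space $K$ and nowhere dense subset $L$ produced by Theorem~\ref{th:K-L-connected-construction}. I would first use Theorem~\ref{th:K-L-connected-construction}(b): since $L$ admits a continuous surjection $\phi : L \longrightarrow 2^\omega$, the composition map $g \longmapsto g\circ \phi$ gives an isometric embedding of $C(2^\omega)$ into $C(L)$; combined with the Banach--Mazur theorem, this realizes the arbitrary separable space $E$ as a closed subspace of $C(L)$. With this identification fixed, I would define
$$
\widetilde{X}(E) := C_E(K\|L).
$$

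Next, I would verify the three claimed properties in turn. Extreme non-complexity of $\widetilde{X}(E)$ follows directly from Theorem~\ref{thm-weakmultipliers-ex-non-complex} combined with Theorem~\ref{th:K-L-connected-construction}(c), since every operator on $C(K)$ is a weak multiplier. The decomposition
$$
\widetilde{X}(E)^* \equiv C_0(K\|L)^* \oplus_1 E^*
$$
is exactly Lemma~\ref{lemma-decomposition-dual}, so one may take $Z = C_0(K\|L)^*$. For the isometry group, I would apply Corollary~\ref{cor:K-L-connected-isometries}: by Theorem~\ref{th:K-L-connected-construction}(a), both $K$ and $K\setminus L$ are connected, and since $\widetilde{X}(E)$ is extremely non-complex, the corollary yields $\iso(\widetilde{X}(E)) = \{\Id, -\Id\}$.

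Finally, the statement about $\iso(E^*)$ as a subgroup of $\iso(\widetilde{X}(E)^*)$ is a routine consequence of the $\ell_1$-sum structure. Given any $U \in \iso(E^*)$, define $\widetilde{U} \in L(\widetilde{X}(E)^*)$ on the decomposition $C_0(K\|L)^* \oplus_1 E^*$ by
$$
\widetilde{U}(z, e^*) = \bigl(z,\, U(e^*)\bigr).
$$
The map $\widetilde{U}$ is a surjective isometry because coordinate-wise isometries preserve the $\ell_1$-norm, and $U \longmapsto \widetilde{U}$ is plainly an injective group homomorphism from $\iso(E^*)$ into $\iso(\widetilde{X}(E)^*)$.

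\textbf{Main obstacle.} There is essentially no obstacle left at this stage: all the heavy lifting has already been done. The nontrivial inputs are (i) the construction of $K$ and $L$ in Theorem~\ref{th:K-L-connected-construction}, which requires the delicate modification of the construction from \cite[\S5]{Koszmider} to keep both $K$ and $K\setminus L$ connected while still having $L$ surject onto the Cantor set, and (ii) the structural result in Corollary~\ref{cor:K-L-connected-isometries} pinning down the isometries to $\{\pm\Id\}$. Once these are in hand, the proof of the theorem is a matter of combining them with Theorem~\ref{thm-weakmultipliers-ex-non-complex} and Lemma~\ref{lemma-decomposition-dual}.
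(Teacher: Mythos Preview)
Your proposal is correct and follows essentially the same route as the paper's own proof: take $K$ and $L$ from Theorem~\ref{th:K-L-connected-construction}, embed $E$ into $C(L)$ via the Cantor-set surjection and Banach--Mazur, set $\widetilde{X}(E)=C_E(K\|L)$, and then invoke Theorem~\ref{thm-weakmultipliers-ex-non-complex}, Corollary~\ref{cor:K-L-connected-isometries}, and Lemma~\ref{lemma-decomposition-dual} in turn. The only cosmetic difference is that the paper cites \cite[Proposition~2.4]{MarIso} for the subgroup claim whereas you write out the coordinate-wise embedding $U\longmapsto \widetilde{U}$ explicitly.
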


\begin{proof}
Consider the compact space $K$ and the nowhere dense closed subset
$L\subset K$ given in Theorem~\ref{th:K-L-connected-construction}.
As there is a surjective continuous function from $L$ to the Cantor
set, every separable Banach space $E$ is a subset of $C(L)$. Let
$\widetilde{X}(E)$ be $C_E(K\|L)$. Then, $\widetilde{X}(E)$ is
extremely non complex since every operator on $C(K)$ is a weak
multiplier and we may use
Theorem~\ref{thm-weakmultipliers-ex-non-complex}. Now, since
$K\setminus L$ is connected, we may apply
Corollary~\ref{cor:K-L-connected-isometries} to get that
$\iso\bigl(\widetilde{X}(E)\bigr)=\{\Id,-\Id\}$. Finally,
Lemma~\ref{lemma-decomposition-dual} gives us that
$\widetilde{X}(E)^*=E^*\oplus_1 C_0(K\|L)^*$ and so
$\iso\bigl(\widetilde{X}(E)^*\bigr)$ contains $\iso(E^*)$ as a
subgroup (see \cite[Proposition~2.4]{MarIso} for instance).
\end{proof}

Let us comment that all the spaces $\widetilde{X}(E)$ constructed
above are non-separable. We do not know whether separable spaces
with the same properties can be constructed.

The case $E=\ell_2$ in Theorem~\ref{thm:everyE-CEKL-isometries}
gives the following specially interesting example.

\begin{example}
{\slshape There is a Banach space $\widetilde{X}(\ell_2)$ such
that $\iso\bigl(\widetilde{X}(\ell_2)\bigr)=\{\Id,-\Id\}$ but
$\iso\bigl(\widetilde{X}(\ell_2)^*\bigr)$ contains
$\iso(\ell_2)$ as a subgroup. Therefore,
$\iso\bigl(\widetilde{X}(\ell_2)\bigr)$ is trivial, while
$\iso\bigl(\widetilde{X}(\ell_2)^*\bigr)$ contains infinitely
many uniformly continuous one-parameter semigroups of
surjective isometries.\ }
\end{example}

Recently, the second author of this paper constructed a Banach
space $X(\ell_2)$ such that $\iso\bigl(X(\ell_2)\bigr)$ does not
contain any \emph{uniformly continuous} one-parameter semigroup of
surjective isometries, while $\iso\bigl(X(\ell_2)^*\bigr)$
contains infinitely many of them \cite[Example~4.1]{MarIso}. But
it is not difficult to show that $\iso\bigl(X(\ell_2)\bigr)$ does
not reduce to $\{\Id,-\Id\}$ and, actually, it contains infinitely
many \emph{strongly continuous} one-parameter semigroups of
surjective isometries. We refer the reader to the books
\cite{Engel-Nagel,Engel-Nagel-shot} for background on
one-parameter semigroups of operators and to the monographs
\cite{Fle-Jam1,Fle-Jam2} for more information on isometries on
Banach spaces.

\vspace{5mm}

\noindent\textbf{Acknowledgements.} The authors would like to
thank Vladimir Kadets, Rafael Ortega, Rafael Pay\'{a} and Armando
Villena for fruitful conversations on the subject of the paper.
Part of this work was done while Miguel Mart\'{\i}n was visiting the
Technical University of \L odz in July 2008 and also while Piotr
Koszmider was visiting the University of Granada in September
2008. Both authors would like to thank the hosting institutions
for partial financial support.

\end{document}